\documentclass[reqno]{amsart}
\usepackage[T1]{fontenc}
\usepackage[margin=1in]{geometry}
\usepackage{amsmath}
\usepackage{amssymb}
\usepackage{amsthm}
\usepackage{multicol}
\begin{document}
\def\eq#1{{\rm(\ref{#1})}}
\theoremstyle{plain}
\newtheorem*{theo}{Theorem}
\newtheorem*{ack}{Acknowledgements}
\newtheorem*{pro}{Proposition}
\newtheorem*{coro}{Corollary}
\newtheorem*{lemm}{Lemma}
\newtheorem{thm}{Theorem}[section]
\newtheorem{lem}[thm]{Lemma}
\newtheorem{prop}[thm]{Proposition}
\newtheorem{cor}[thm]{Corollary}
\theoremstyle{definition}
\newtheorem{dfn}[thm]{Definition}
\newtheorem*{rem}{Remark}
\def\coker{\mathop{\rm coker}}
\def\codim{\mathop{\rm codim}}
\def\ind{\mathop{\rm ind}}
\def\Re{\mathop{\rm Re}}
\def\Vol{\rm Vol}
\def\Im{\mathop{\rm Im}}
\def\im{\mathop{\rm im}}
\def\sp{\mathop{\rm span}}
\def\Hol{{\textstyle\mathop{\rm Hol}}}
\def\C{{\mathbin{\mathbb C}}}
\def\R{{\mathbin{\mathbb R}}}
\def\N{{\mathbin{\mathbb N}}}
\def\Z{{\mathbin{\mathbb Z}}}
\def\O{{\mathbin{\mathbb O}}}
\def\L{{\mathbin{\mathcal L}}}
\def\X{{\mathbin{\mathcal X}}}
\def\CV{{\mathbin{\mathcal C}(V)}}
\def\al{\alpha}
\def\be{\beta}
\def\ga{\gamma}
\def\de{\delta}
\def\ep{\epsilon}
\def\io{\iota}
\def\ka{\kappa}
\def\la{\lambda}
\def\ze{\zeta}
\def\th{\theta}
\def\vt{\vartheta}
\def\vp{\varphi}
\def\si{\sigma}
\def\up{\upsilon}
\def\om{\omega}
\def\De{\Delta}
\def\Ga{\Gamma}
\def\Th{\Theta}
\def\La{\Lambda}
\def\Om{\Omega}
\def\Up{\Upsilon}
\def\sm{\setminus}
\def\na{\nabla}
\def\pd{\partial}
\def\op{\oplus}
\def\ot{\otimes}
\def\bigop{\bigoplus}
\def\bigot{\bigotimes}
\def\iy{\infty}
\def\ra{\rightarrow}
\def\longra{\longrightarrow}
\def\dashra{\dashrightarrow}
\def\t{\times}
\def\w{\wedge}
\def\bigw{\bigwedge}
\def\d{{\rm d}}
\def\bs{\boldsymbol}
\def\ci{\circ}
\def\ti{\tilde}
\def\ov{\overline}
\def\sv{\star\vp}
\title{On an Almost Contact Structure on $G_2$-Manifolds}

\author{A. J. Todd}

\address {Department of Mathematics \& Statistics, University of South Alabama, Mobile, AL, 36688}
\email{ajtodd@southalabama.edu}

\begin{abstract}
In this article, we study an almost contact metric structure on a $G_2$-manifold constructed by Arikan, Cho and Salur in \cite{ACS1} via the classification of almost contact metric structures given by Chinea and Gonzalez \cite{ChGo}. In particular, we characterize when this almost contact metric structure is cosymplectic and narrow down the possible classes in which this almost contact metric structure could lie. Finally, we show that any closed $G_2$-manifold admits an almost contact metric $3$-structure by constructing it explicitly and characterize when this almost contact metric $3$-structure is $3$-cosymplectic. 
\end{abstract}

\date{}
\maketitle
\section{Introduction}

Let $M$ be a smooth manifold of dimension $2n+1$. Let $\Xi$ denote a $2n$-dimensional distribution on $M$ which is maximally non-integrable. Locally, we have that $\Xi=\ker\al$ for some $1$-form $\al$; the ``maximally non-integrable'' condition then means that $a\w(\d\al)^n\neq 0$. If such a $\Xi$ exists, then the structure group of the tangent bundle to $M$ admits a reduction to the group $U(n)\t 1$. Conversely, if $M$ is an odd-dimensional manifold such that the structure group of the tangent bundle to $M$ admits a reduction to $U(n)\t 1$, then $M$ is said to have an \emph{almost contact structure}. Recently Borman, Eliashberg and Murphy \cite{BEM} have shown that in fact any closed, odd-dimensional manifold with an almost contact structure admits a contact structure. Our interest here is not in the contact structure itself, but rather the underlying almost contact structure. The reason is that such a reduction of the structure group of the tangent bundle to $M$ is equivalent to the existence of a $(1,1)$-
tensor $\phi$, a vector field $\xi$ and a $1$-form $\eta$ satisfying
\begin{equation}
 \phi^2=-I+\eta\ot\xi
\label{acs1}
\end{equation}
\begin{equation}
 \eta(\xi)=1
\label{acs2}
\end{equation}
Often, we will generally refer to the triple $(\phi,\xi,\eta)$ as the almost contact structure.  It is straight-forward to show that equations \eq{acs1} and \eq{acs2} imply that $\phi(\xi)\equiv 0$, $\eta\circ\phi\equiv 0$ and that $\phi$ has rank $2n$. A Riemannian metric $g$ is said to be \emph{compatible} with the almost contact structure $(\phi,\xi,\eta)$ if
\begin{equation}
 g(\phi X,\phi Y)=g(X,Y)-\eta(X)\eta(Y)
\label{acms1}
\end{equation}
for any vector fields $X$ and $Y$ on $M$. In this case, the quadruple $(\phi,\xi,\eta,g)$ is called an \emph{almost contact metric structure}. Using equations \eq{acs2} and \eq{acms1} and that $\phi\xi=0$ yields 
\begin{equation}
 g(\xi,X)=\eta(X)
\label{acms2}
\end{equation}
for any vector field $X$ on $M$. On any manifold with an almost contact structure, there always exists a compatible metric; however, such a compatible metric is not in general unique. Using $\phi$ and $g$, we can now define a smooth differential $2$-form $\om$ by
\begin{equation}
 \om(X,Y)=g(X,\phi Y)
\label{acms3}
\end{equation}
This $2$-form is called the \emph{fundamental $2$-form} of the almost contact metric structure. Note that $\om$ satisfies
\begin{equation}
 \eta\w\om^n\neq 0
\end{equation}
showing that almost contact metric manifolds are orientable. A reference for this material and more information on contact and almost contact geometry can be found in \cite{Blair2}. Various conditions can be placed on the almost contact metric structure $(\phi,\xi,\eta,g)$ to give various classes of almost contact metric structures such as cosymplectic, Sasakian and Kenmotsu geometries. These types of almost contact metric structures, and several others, have been studied extensively by many authors, see, e. g., \cite{Blair1}, \cite{Blair2}, \cite{BoGa}, \cite{CaNi}, \cite{CNY1}, \cite{CNY2}, \cite{GoYa}, \cite{IOV}, \cite{JaVa}, \cite{Kenm}, \cite{Kuo}, \cite{Li}, \cite{Ludd}, \cite{MaMu}, \cite{Olsz} and \cite{Puhl}. The classification of the various types of almost contact metric structures was undertaken by Chinea and Gonzalez \cite{ChGo} wherein they classified almost contact metric structures based on the covariant derivative of a certain differential $2$-form associated to the almost contact metric 
structure. We will discuss their classification in Section \ref{CACMS}.

Let $M$ be a $7$-dimensional manifold admitting a smooth differential $3$-form $\vp$ such that, for all $p\in M$, the pair $(T_pM,\vp)$ is isomorphic as an oriented vector space to the pair $(\R^7,\vp_0)$ where 
\begin{equation}
\vp_0=\d x^{123}+\d x^{145}+\d x^{167}+\d x^{246}-\d x^{257}-\d x^{347}-\d x^{356}
\label{g23form}
\end{equation}
with $\d x^{ijk}=\d x^i\w \d x^j\w \d x^k$. In \cite{Br1}, it is shown that the Lie group $G_2$ can be defined as the set of all elements of $GL(7,\R)$ that preserve $\vp_0$, so for a manifold admitting such a $3$-form, there is a reduction in the structure group of the tangent bundle to the exceptional Lie group $G_2$; hence, the pair $(M,\vp)$ is called a \emph{manifold with $G_2$-structure} whereby an abuse of terminology we often refer to $\vp$ as the $G_2$-structure. Using the theory of $G$-structures and the inclusion of $G_2$ in $SO(7)$, all manifolds with $G_2$-structure are necessarily orientable and spin, any orientable $7$-manifold with spin structure admits a $G_2$-structure, and associated to a given $G_2$-structure $\vp$ are a metric $g_{\vp}$ called the \emph{$G_2$-metric}, satisfying
\begin{equation}
(X\lrcorner\vp)\w(Y\lrcorner\vp)\w\vp=6g_{\vp}(X,Y)\Vol_{\vp}
\label{g2metric}
\end{equation}
for any vector fields $X$ and $Y$ on $M$ and a $2$-fold vector cross product $\t$; these three structures are related via
\begin{equation}
 \vp(X,Y,Z)=g_{\vp}(X\t Y, Z)
\label{g21}
\end{equation}
for all vector fields $X, Y, Z$. It is important to note that, one can also show that starting with either the two-fold vector cross product or the $G_2$-metric $g_{\vp}$, one can always obtain the other two structures (see e. g. \cite{Jo1} or \cite{FeGr}; thus in particular, if the flow of a vector field preserves one of these structures, it must also preserve the other two. A natural geometric requirement is that the $3$-form $\vp$ be covariant constant with respect to the Levi-Civita connection of the $G_2$-metric $g_{\vp}$; if this is so, we say that the $G_2$-structure is \emph{integrable} and call the pair $(M,\vp)$ a \emph{$G_2$-manifold}. Note that for a $G_2$-manifold $(M,\vp)$, the $2$-fold vector cross product $\t$ is also covariant-constant with respect to the Levi-Civita connection of the $G_2$-metric $g_{\vp}$. It is a nontrivial fact that the integrability of the $G_2$-structure is equivalent to the holonomy of $g_{\vp}$ being a subgroup of $G_2$ as well as $\vp$ being simultaneously closed 
and coclosed, that is, $\d\vp=0$ and $\de\vp=0$ respectively, where $\de$ is the adjoint to the exterior derivative $\d$ defined in terms of the Hodge star $\star$ of $g_{\vp}$. References for $G_2$-geometry and constructions of manifolds with $G_2$-holonomy include \cite{FeGr}, \cite{BrGr}, \cite{Br1}, \cite{Br2}, \cite{BrSa}, \cite{FeGr}, \cite{HaLa}, \cite{Jo1}, \cite{Jo2}, \cite{Kova}, \cite{Sa}, \cite{Verb} and \cite{Will}.

Note that, by Berger's classification of Riemannian holonomy groups, the only nontrivial subgroups of $G_2$ which can occur as the holonomy of a Riemannian manifold are $SU(2)$ and $SU(3)$. From \cite{Jo1} (or \cite{Jo2}), if $M$ is a $G_2$-manifold with holonomy equal to $SU(2)$, then $M$ is given by one of the Cartesian products $N\t\R^3$ or $N\t T^3$ where $N$ is a Calabi-Yau $2$-fold and $T^3$ is the $3$-torus; on the other hand, if $M$ is a $G_2$-manifold with holonomy equal to $SU(3)$, then $M$ is given by one of the Cartesian products $N\t\R$ or $N\t S^1$ where $N$ is a Calabi-Yau $3$-fold and $S^1$ is the unit circle. In this way, we see that there is a direct relation between $G_2$-geometry and symplectic geometry; however, in the case where $M$ has holonomy equal to $G_2$, we see that $G_2$-geometry really is its own geometry. An important property of $G_2$-manifolds with full $G_2$-holonomy is that they are simply connected, so in particular, the first Betti number of such a manifold is $0$. 
Moreover, all $G_2$-manifolds, regardless of holonomy, are Ricci-flat. It is well known that on a Ricci-flat manifold, the sets of parallel (with respect to the Levi-Civita connection) vector fields, Killing vector fields and harmonic vector fields coincide (a vector field is called \emph{harmonic} if its metric-dual $1$-form is harmonic in the sense of Hodge theory); hence, on a $G_2$-manifold with full $G_2$-holonomy there cannot exist any nonzero parallel vector fields and hence no nonzero Killing vector fields nor nonzero harmonic vector fields. Further, the existence of such a vector field (nonzero) immediately implies that the holonomy must either be trivial, $SU(2)$ or $SU(3)$.

With this juxtaposition, a natural question occurs. Do there exists $7$-dimensional manifolds which admit both a $G_2$-structure and a contact structure, and if so, how do these geometries interact? This particular line of research has only just begun. In the literature, there is work of Matzeu and Munteanu \cite{MaMu} on vector cross products and almost contact structures focusing in particular on hypersurfaces of $\R^8$, and a similar construction appears in \cite{Blair2}; however, work of Arikan, Cho and Salur \cite{ACS1}, \cite{ACS2} has really brought this issue to light. Using techniques of spin geometry, they show \cite[Cor. $3.2$]{ACS1} that every manifold with $G_2$-structure admits an almost contact structure. Note that this result does not depend on the manifold being closed, compact or noncompact nor does it depend on the integrability or non-integrability of the $G_2$-structure itself. Unfortunately, this result is nonconstructive; however, with this, we now have that there always exists at 
least one non-vanishing vector field, call it $\xi$, on a manifold with $G_2$-structure. Assume that $\xi$ has been normalized to have length $1$ everywhere using the $G_2$-metric $g_{\vp}$, then using $\xi$ and the two-fold vector cross product, define a $(1,1)$-tensor $\phi$ by
\begin{equation}
 \phi(X)=\xi\t X
\label{g2acms1}
\end{equation}
and a $1$-form $\eta$ by
\begin{equation}
 \eta(X)=g_{\vp}(\xi,X)
\end{equation}
Since the cross product on a $7$-manifold satisfies
\begin{equation}
 X_1\t(X_1\t X_2)=-g_{\vp}(X_1,X_1)X_2+g_{\vp}(X_1,X_2)X_1
\label{cp1}
\end{equation}
we immediately have that $\phi^2=-I+\eta\ot\xi$; that $\eta(\xi)=1$ follows immediately from the definitions. Thus, $(\phi,\xi,\eta)$ as defined here give an almost contact structure; moreover, this almost contact structure is compatible with $g_{\vp}$ since
\begin{equation}
 \begin{split}
  g_{\vp}(\phi X,\phi Y)&=g_{\vp}(\xi\t X,\xi\t Y)=\vp(\xi,X,\xi\t Y)=-\vp(\xi,\xi\t Y,X)=-g_{\vp}(\xi\t(\xi\t Y),X)\\
  &=-g_{\vp}(-Y+\eta(Y)\xi,X)=g_{\vp}(Y,X)-\eta(Y)g_{\vp}(\xi,X)=g_{\vp}(X,Y)-\eta(X)\eta(Y)\\
 \end{split}
\end{equation}
Hence, we see that $(\phi,\xi,\eta,g_{\vp})$ is an almost contact metric structure. Throughout we will refer to this as the \emph{standard $G_2$ almost contact metric structure}

Let $(M,\Xi)$ be a contact manifold where $\Xi$ is the $2n$-dimensional contact distribution. An almost contact metric structure $(\phi,\xi,\eta,g)$ is said to be the \emph{associated} almost contact metric structure of the contact structure if $\Xi=\ker\eta$ and $\om=\d\eta$ where $\om$ is given by \eq{acms3}. It is interesting to note, especially in light of the results from \cite{BEM}, that the above constructed almost contact metric structure on a closed $7$-manifold $M$ with $G_2$-structure cannot be the associated almost contact metric structure of a contact metric structure if the $G_2$-structure is closed, i. e., $\d\vp=0$ (see \cite[Corollary 5.8]{ACS1}). For if so
\begin{equation}
 \d\eta(X,Y)=g_{\vp}(X,\phi Y)=g(X,\xi\t_{\vp}Y)=\vp(\xi,Y,X)=-(\xi\lrcorner\vp)(X,Y)
\end{equation}
and hence, by equation \eq{g2metric} and an application of Stokes' Theorem,
\begin{equation}
\begin{split}
 Vol(M)&=\int_MVol_{\vp}=\int_M\frac{1}{6}((\xi\lrcorner\vp)\w(\xi\lrcorner\vp)\w\vp)\\
 &=\frac{1}{6}\int_M(\d\eta\w\d\eta\w\vp)=\frac{1}{6}\int_M\d(\eta\w\d\eta\w\vp)=\frac{1}{6}\int_{\partial M}(\eta\w\d\eta\w\vp)=0
\end{split}
\end{equation}
In particular, this is the case on a closed $G_2$-manifold.

Hence the goal of the current article is to further this line of research by shedding light on the classification of the standard $G_2$ almost contact structure focusing specifically on closed $G_2$-manifolds; also, this article is part of a larger research goal to use the well-established areas of symplectic and contact geometries to gain a better understanding of $G_2$-geometry, see \cite{CST1}, \cite{CST2}, \cite{Todd}. In Section $2$, we will discuss the classification scheme of Chinea and Gonzalez \cite{ChGo}; in Section $3$, we will use their classification to better understand the standard $G_2$ almost contact structure. In Section $4$, we give a brief introduction to almost contact metric $3$-structures from the work Kuo \cite{Kuo}. Finally, in Section $5$ we will construct an almost contact $3$-structure on a closed $7$-manifold with (possibly non-integrable) $G_2$-structure and use our results from Section $3$ to give some fundamental results on these structures. Our main results are as follows.

\begin{theo}
$\d\om=0$ if and only if $\na\xi=0$. Further, in this case, $\d\eta=0$, and the almost contact structure is normal and hence is cosymplectic.
\end{theo}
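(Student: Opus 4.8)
The plan is to recognize $\om$ as a contraction of the parallel $3$-form $\vp$ and then to play the rigidity of $G_2$-structures off against the Ricci-flatness of $G_2$-manifolds. First I record that, combining \eq{g21}, \eq{acms3} and \eq{g2acms1} exactly as in the introduction, the fundamental $2$-form is $\om=-\,\xi\lrcorner\vp$. For the implication $\na\xi=0\Rightarrow\d\om=0$: on a $G_2$-manifold $\na\vp=0$, so if in addition $\na\xi=0$ then the Leibniz rule gives $\na_Y\om=-\na_Y(\xi\lrcorner\vp)=-(\na_Y\xi)\lrcorner\vp-\xi\lrcorner(\na_Y\vp)=0$ for every $Y$, and since the exterior derivative of a parallel form vanishes (the connection being torsion-free), $\d\om=0$. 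For the converse $\d\om=0\Rightarrow\na\xi=0$, I would use Cartan's formula together with $\d\vp=0$ to get $\L_\xi\vp=\d(\xi\lrcorner\vp)+\xi\lrcorner\d\vp=-\d\om=0$. Hence the flow of $\xi$ preserves $\vp$, so by the observation recalled in the introduction it also preserves the $G_2$-metric $g_\vp$; that is, $\L_\xi g_\vp=0$ and $\xi$ is a Killing field. Since every $G_2$-manifold is Ricci-flat and Killing fields on a Ricci-flat manifold are parallel, $\na\xi=0$ follows.

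For the ``further'' statement, assume $\na\xi=0$. Then $\eta=g_\vp(\xi,\cdot)$ is parallel, so $\d\eta=0$. Moreover, on a $G_2$-manifold the two-fold cross product $\t$ is also parallel, so from \eq{g2acms1}, $(\na_Y\phi)X=(\na_Y\xi)\t X=0$, i.e. $\na\phi=0$. Normality of $(\phi,\xi,\eta)$ amounts to the vanishing of a tensor built from the Nijenhuis torsion of $\phi$ and $\d\eta$; because the Levi-Civita connection is torsion-free, the Nijenhuis torsion of $\phi$ is expressible purely through $\na\phi$, so $\na\phi=0$ makes it vanish while $\d\eta=0$ kills the remaining term, and hence the almost contact structure is normal. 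Being normal and satisfying $\d\eta=0$ and $\d\om=0$, the structure is cosymplectic by definition; equivalently, $\na\phi=0$ (together with $\na\eta=0$, $\na\om=0$), so the entire structure is parallel, which is the cosymplectic condition, corresponding in the Chinea and Gonzalez scheme recalled in Section \ref{CACMS} to the class characterized by $\na\om=0$. In particular, $\d\om=0$ can occur only when the holonomy of $g_\vp$ is a proper subgroup of $G_2$.

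The step I expect to be the main obstacle is the converse implication $\d\om=0\Rightarrow\na\xi=0$. The Cartan-formula computation yields only $\L_\xi\vp=0$; the real content is upgrading this first to $\L_\xi g_\vp=0$ --- here one must invoke that $\vp$ determines $g_\vp$, so that the flow of $\xi$ acts by isometries --- and then to $\na\xi=0$, which relies on Ricci-flatness of $G_2$-manifolds. The remaining steps are formal consequences of the parallelism of $\vp$, $g_\vp$ and $\t$ on a $G_2$-manifold, together with the standard expressions for $\d$ and for the Nijenhuis torsion in terms of the Levi-Civita connection.
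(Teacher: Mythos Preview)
Your proof is correct and follows essentially the same route as the paper: identify $\om=-\xi\lrcorner\vp$, use Cartan's formula with $\d\vp=0$ to get $\d\om=0\Leftrightarrow\L_\xi\vp=0$, then upgrade via ``$\vp$ determines $g_\vp$'' and Ricci-flatness to $\na\xi=0$, and finally deduce $\d\eta=0$ and normality from $\na\phi=0$. The only cosmetic differences are that the paper does not separately argue the forward implication via $\na\om=0$ (its Cartan computation handles both directions at once) and verifies normality by an explicit expansion of $[\phi,\phi]+2\d\eta\,\xi$ rather than invoking the general identity expressing $[\phi,\phi]$ through $\na\phi$.
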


\begin{coro}
 If $(M,\vp)$ is a $G_2$-manifold with full $G_2$-holonomy, then $\d\om\neq 0$ (or equivalently, $\na\xi\neq0$). In particular, the standard $G_2$ almost contact metric structure cannot be cosymplectic, almost cosymplectic nor quasi-Sasakian.
\end{coro}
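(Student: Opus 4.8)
The plan is to derive this directly from the Theorem together with the structural facts about $G_2$-manifolds recalled in the Introduction, so the argument is essentially a one-line deduction plus a bookkeeping check on definitions. First I would invoke the fact, stated above, that every $G_2$-manifold is Ricci-flat and that on a Ricci-flat manifold the parallel, Killing and harmonic vector fields coincide; when the holonomy is all of $G_2$, the standard representation of $G_2$ on $\R^7$ is irreducible and fixes no nonzero vector, so by the holonomy principle $M$ carries no nonzero parallel vector field (this is exactly the remark made in the Introduction).

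Next I would use that $\xi$ is, by its very construction, a unit vector field with respect to $g_{\vp}$, hence nowhere vanishing. Therefore $\xi$ cannot be parallel, i.e. $\na\xi\not\equiv 0$. By the Theorem, $\d\om=0$ holds if and only if $\na\xi=0$; since the latter fails on a full-holonomy $G_2$-manifold, we conclude $\d\om\neq 0$ there, which is the first assertion of the Corollary.

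For the ``in particular'' part, I would recall from the Chinea--Gonzalez classification (to be set up in Section \ref{CACMS}) the defining conditions of the three named classes: a cosymplectic structure is normal with $\d\eta=0$ and $\d\om=0$; an almost cosymplectic structure satisfies $\d\eta=0$ and $\d\om=0$; and a quasi-Sasakian structure is normal with $\d\om=0$. Each of these classes requires the fundamental $2$-form $\om$ to be closed. Since we have just shown $\d\om\neq 0$ for the standard $G_2$ almost contact metric structure on a $G_2$-manifold with full $G_2$-holonomy, none of these three classes can contain it.

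I do not anticipate any genuine obstacle here: the Theorem does all the real work, and the Ricci-flatness together with the holonomy obstruction to parallel vector fields is already in hand. The only point needing care is to cite the correct defining conditions for cosymplectic, almost cosymplectic and quasi-Sasakian structures and to note that all three include closedness of $\om$; once those definitions are in place the conclusion is immediate.
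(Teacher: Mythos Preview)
Your proof is correct and follows essentially the same route as the paper: the paper's own proof simply notes that a nonzero parallel vector field would force a reduction of the holonomy group to a proper subgroup of $G_2$, hence $\na\xi\neq 0$ and so $\d\om\neq 0$ by the Theorem, and then observes that cosymplectic, almost cosymplectic and quasi-Sasakian all require $\d\om=0$. Your version is slightly more verbose (the Ricci-flat/Killing/harmonic equivalence is not actually needed at this point, only the holonomy obstruction to parallel fields), but the logic is identical.
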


\begin{coro}
 If $(M,\vp)$ is a $G_2$-manifold, then the standard $G_2$ almost contact metric structure cannot be a contact metric structure. In particular, it cannot be Sasakian (or more generally, $a$-Sasakian where $a$ is constant).
\end{coro}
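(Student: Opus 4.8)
The plan is to reduce this to the Theorem above, exploiting the single fact that the fundamental $2$-form of a contact metric structure is exact. Suppose, toward a contradiction, that the standard $G_2$ almost contact metric structure $(\phi,\xi,\eta,g_{\vp})$ is a contact metric structure, i.e.\ that it is the associated almost contact metric structure of some contact structure on $M$. By the definition recalled in the Introduction this means exactly that $\om=\d\eta$, where $\om(X,Y)=g_{\vp}(X,\phi Y)$ is the fundamental $2$-form from \eq{acms3}. In particular $\om$ is exact, so $\d\om=\d(\d\eta)=0$.

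Now I would invoke the Theorem: $\d\om=0$ forces $\d\eta=0$ (and $\na\xi=0$). Combined with $\om=\d\eta$ this yields $\om\equiv 0$. But $\om$ is the fundamental $2$-form of an almost contact metric structure on the $7$-manifold $M$, hence satisfies $\eta\w\om^{3}\neq 0$ at every point; in particular $\om$ is nowhere zero, which is the desired contradiction. Note that, in contrast to the volume computation in the Introduction, this argument uses no compactness or closedness hypothesis, so it covers all $G_2$-manifolds, compact or not.

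For the last two assertions I would argue that Sasakian and $a$-Sasakian (with $a$ constant) structures have, up to a nonzero constant, the same fundamental-form behaviour. A Sasakian structure is by definition a normal contact metric structure, so the first part applies verbatim. More generally, if $(\phi,\xi,\eta,g_{\vp})$ were $a$-Sasakian with $a$ a nonzero constant, then from the defining identity $(\na_X\phi)Y=a\bigl(g_{\vp}(X,Y)\xi-\eta(Y)X\bigr)$ one deduces $\na_X\xi=-a\,\phi X$, and hence $\d\eta(X,Y)=g_{\vp}(\na_X\xi,Y)-g_{\vp}(\na_Y\xi,X)$ is a nonzero constant multiple of $\om(X,Y)$; thus $\om$ is again exact, $\d\om=0$, and the Theorem produces the same contradiction. (If $a=0$ the structure is cosymplectic, a case already constrained by the preceding Corollary.)

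I do not anticipate a serious obstacle. The only points that require care are bookkeeping ones: confirming that ``contact metric'' does force the normalization $\om=\d\eta$ used throughout, and carrying out the short computation of $\na\xi$ for an $a$-Sasakian structure; the conceptual content — that exactness of $\om$ triggers the Theorem, which then collapses $\om$ to zero against $\eta\w\om^{3}\neq 0$ — is immediate.
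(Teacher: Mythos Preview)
Your argument is correct and is exactly the paper's: from $\om=\d\eta$ deduce $\d\om=0$, invoke the Theorem to obtain $\d\eta=0$, hence $\om=0$, contradicting $\eta\w\om^{3}\neq 0$. The only difference is cosmetic: the paper's definition of almost $a$-Sasakian is $a\om=\d\eta$, so for constant $a\neq 0$ one differentiates to get $\d\om=0$ directly and repeats the contradiction, making your detour through $(\na_X\phi)Y$ and $\na_X\xi=-a\,\phi X$ unnecessary.
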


\begin{theo}
 Let $M$ be a $G_2$-manifold with the standard $G_2$ almost contact metric structure. If $\na\xi\neq 0$, then the almost contact metric structure is of class $\mathcal{G}$ where 
 \begin{enumerate}
  \item In general, we have
 \begin{equation*}
  \mathcal{G}\subseteq \mathcal{C}_5\op\mathcal{C}_6\op\mathcal{C}_7\op\mathcal{C}_8\op\mathcal{C}_9\op\mathcal{C}_{10}\op\mathcal{C}_{12}
 \end{equation*}
  \item In the case that $\de\eta=0$, then
 \begin{equation*}
  \mathcal{G}\subseteq \mathcal{C}_6\op\mathcal{C}_7\op\mathcal{C}_8\op\mathcal{C}_9\op\mathcal{C}_{10}\op\mathcal{C}_{12}
 \end{equation*}
  \item In the case that $\na_{\xi}\xi=0$, then
 \begin{equation*}
  \mathcal{G}\subseteq \mathcal{C}_5\op\mathcal{C}_6\op\mathcal{C}_7\op\mathcal{C}_8\op\mathcal{C}_9\op\mathcal{C}_{10}
 \end{equation*}
  \item In the case that both $\de\eta=0$ and $\na_{\xi}\xi=0$,
 \begin{equation*}
  \mathcal{G}\subseteq \mathcal{C}_6\op\mathcal{C}_7\op\mathcal{C}_8\op\mathcal{C}_9\op\mathcal{C}_{10}
 \end{equation*}
  \item In the case that the almost contact structure is normal, then
 \begin{equation*} 
  \mathcal{G} \subseteq \mathcal{C}_5\op\mathcal{C}_6\op\mathcal{C}_7\op\mathcal{C}_8
 \end{equation*}
  \item In the case that the almost contact structure is normal and $\de\eta=0$, then
 \begin{equation*}
  \mathcal{G} \subseteq \mathcal{C}_6\op\mathcal{C}_7\op\mathcal{C}_8
 \end{equation*}
 \end{enumerate}
\end{theo}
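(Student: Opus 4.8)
The plan is to compute the covariant derivative $\na\om$ of the fundamental $2$-form for the standard $G_2$ almost contact metric structure and match it against the Chinea--Gonzalez decomposition of the space of such tensors into the twelve components $\mathcal{C}_1,\dots,\mathcal{C}_{12}$. Since $\om(X,Y)=g_\vp(X,\phi Y)=g_\vp(X,\xi\t Y)=\vp(X,\xi,Y)=-(\xi\lrcorner\vp)(X,Y)$, we have $\om=-\xi\lrcorner\vp$ globally, and because $\vp$ is parallel on a $G_2$-manifold, $\na_Z\om=-(\na_Z\xi)\lrcorner\vp$ for every vector field $Z$. Thus $\na\om$ is completely controlled by the single tensor $\na\xi$, which is the identically-zero obstruction exactly in the cosymplectic case (first Theorem). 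The first step, then, is to write $(\na_Z\om)(X,Y)=-\vp(\na_Z\xi,X,Y)$ and observe that this is a contraction of $\vp$ with $\na\xi$; since $\na\xi$ is a general $(1,1)$-tensor (with $g_\vp(\na_Z\xi,\xi)=0$ coming from $|\xi|=1$), the range of possible $\na\om$ tensors is cut down substantially from the generic case.

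The second step is bookkeeping: I would recall from \cite{ChGo} the explicit defining relations of each $\mathcal{C}_i$ in terms of symmetries of $\na\om$ (alternation properties, the presence or absence of the $\eta$-factor, the trace/co-trace conditions involving $\de\om$, $\de\eta$, and the Lee forms), and then test which of these the expression $-\vp(\na_Z\xi,X,Y)$ can possibly satisfy. The key structural facts I expect to use are: (i) $\vp$ is totally antisymmetric, so $(\na_Z\om)(X,Y)$ is automatically antisymmetric in $X,Y$ for fixed $Z$, which kills the classes requiring a symmetric piece in those slots; (ii) the ``horizontal/vertical'' split relative to $\xi$ — writing $\na_Z\xi$ and the arguments $X,Y$ in components along $\xi$ and along $\ker\eta$ — distributes $\na\om$ among the classes indexed by how many $\eta$'s appear; and (iii) the cross-product identity \eq{cp1} together with $\phi X=\xi\t X$ lets me re-express $\vp(\na_Z\xi, X, Y)$ using $\phi$ and $g_\vp$, which is the form in which the Chinea--Gonzalez conditions are stated. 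This analysis should land the general case in $\mathcal{C}_5\op\cdots\op\mathcal{C}_{10}\op\mathcal{C}_{12}$, i.e. it rules out $\mathcal{C}_1,\mathcal{C}_2,\mathcal{C}_3,\mathcal{C}_4,\mathcal{C}_{11}$.

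For parts (2)--(6) the idea is that each added hypothesis kills one more component: $\de\eta=0$ removes $\mathcal{C}_5$ (the class detected by the co-differential of $\eta$, equivalently $\operatorname{div}\xi$); $\na_\xi\xi=0$ removes $\mathcal{C}_{12}$ (the class carried by the ``$\xi$-direction'' part of $\na\xi$, i.e. $\eta\w(\na_\xi\eta)$ or its dual); normality — by the first Theorem, and more precisely by Chinea--Gonzalez's identification of the normal classes among the twelve — restricts to $\mathcal{C}_5\op\mathcal{C}_6\op\mathcal{C}_7\op\mathcal{C}_8$, which are exactly the classes on which the Nijenhuis-type tensor $N$ vanishes; and then imposing $\de\eta=0$ on top of normality again deletes $\mathcal{C}_5$. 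So the arguments for (2) and (5),(6) are really ``look up which $\mathcal{C}_i$ the invariant in question lives in and intersect''; (3) and (4) require the short computation that $\na_\xi\xi=0$ forces the $\mathcal{C}_{12}$-component of $-\vp(\na\xi,\cdot,\cdot)$ to vanish.

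The main obstacle I anticipate is (ii)/(iii) in the second paragraph: verifying carefully, against the precise list of twelve defining conditions in \cite{ChGo}, that the purely antisymmetric-in-$(X,Y)$ tensor $-\vp(\na_Z\xi,X,Y)$ cannot have a $\mathcal{C}_4$ or $\mathcal{C}_{11}$ component is the delicate point — these are the classes where $\na\om$ is built from the Lee form wedged in a specific way, and one must check that the $G_2$ cross-product structure genuinely forbids that pattern rather than merely making it ``generic.'' This amounts to a finite but somewhat intricate $G_2$-representation-theoretic check on the image of the map $\na\xi\mapsto \vp(\na\xi,\cdot,\cdot)$ inside $T^*M\ot\La^2T^*M$, decomposed under $U(3)\t 1\subset G_2$; once that image is pinned down, intersecting it with the Chinea--Gonzalez subspaces gives all six inclusions at once.
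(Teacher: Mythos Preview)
Your overall strategy matches the paper's: both start from the identity $(\na_X\om)(Y,Z)=g(Y,\na_X\xi\times Z)=-\vp(\na_X\xi,Y,Z)$, so that $\na\om$ is completely governed by $\na\xi$, and both then read off the refinements (2)--(6) by intersecting with the known Chinea--Gonzalez classes (in particular $\de\eta\neq 0\Leftrightarrow\be_5\neq 0$, $\na_\xi\xi\neq 0\Leftrightarrow\be_{12}\neq 0$, and normality $\Leftrightarrow\mathcal{C}_3\op\cdots\op\mathcal{C}_8$). Where you diverge is in the mechanism for part~(1). The paper does \emph{not} attempt a direct check of the defining relations of each $\mathcal{C}_i$, nor any $U(3)\times 1\subset G_2$ representation-theoretic argument. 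Instead it works entirely through the eighteen quadratic invariants $i_1,\dots,i_{18}$ of $\CV$ listed in Section~\ref{CACMS}: a short lemma shows $(\na_X\om)(\xi,Y)=g(\xi\times\na_X\xi,Y)$ vanishes identically iff $\na\xi=0$, and this is used to dispose of the $\mathcal{D}_1$-component $\al_1=\be_1+\be_2+\be_3+\be_4$; two further lemmas compute $i_6(\na\om)$ and $i_{16}(\na\om)$ in terms of $\na_{e_j}\xi$ and $\na_\xi\xi$ respectively, and the vanishing of $\be_{11}$ is argued by observing that on $\mathcal{C}_{11}$ one needs $i_5\neq 0$ while $i_5(\na\om)=\sum_{j,k}g(e_j,\na_\xi\xi\times e_k)^2$ is controlled by $\na_\xi\xi$. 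So the paper's route is ``compute a handful of the $i_m$'s and read off which components can survive,'' whereas yours is ``decompose $\vp(\na\xi,\cdot,\cdot)$ under the structure group and match.'' Either bookkeeping scheme is in principle adequate; the invariant route is more mechanical, while your route would make the $G_2$-geometry more visible.

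One small correction to your outline: your item~(i), that antisymmetry of $(\na_Z\om)(X,Y)$ in $X,Y$ ``kills the classes requiring a symmetric piece in those slots,'' does no work --- every element of $\CV$ is already skew in the last two arguments by definition, so this does not distinguish any $\mathcal{C}_i$ from any other. The genuine content in eliminating $\mathcal{C}_1$--$\mathcal{C}_4$ and $\mathcal{C}_{11}$ lies entirely in the horizontal/vertical split you describe in~(ii) together with the specific form $\na\om=-\vp(\na\xi,\cdot,\cdot)$, and that is exactly where the paper spends its effort via the $i_m$'s.
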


\begin{theo}
 Let $(M,\vp)$ be a closed $7$-manifold with (not necessarily integrable) $G_2$-structure $\vp$. Then $M$ admits an almost contact metric $3$-structure which is compatible with the $G_2$-metric.
\end{theo}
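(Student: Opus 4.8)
\emph{Plan.} The idea is to realize the required structure from three almost contact metric structures of the standard $G_2$ type, all compatible with $g_\vp$, built from an orthonormal associative frame field $(\xi_1,\xi_2,\xi_3)$; all of Kuo's axioms will then follow formally from the algebra of the cross product. First, by \cite[Cor.~3.2]{ACS1}, $M$ admits an almost contact structure, whose underlying vector field, normalized via $g_\vp$, I call $\xi_1$; as in the Introduction, $\phi_1(X):=\xi_1\t X$ and $\eta_1(X):=g_\vp(\xi_1,X)$ give the standard $G_2$ almost contact metric structure $(\phi_1,\xi_1,\eta_1,g_\vp)$. The crucial extra ingredient is a second unit vector field $\xi_2$ with $g_\vp(\xi_1,\xi_2)=0$: the endomorphism $\phi_1$ makes $\xi_1^\perp$ a complex rank-$3$ bundle whose structure group the $G_2$-structure reduces to $SU(3)$, and one must show this reduces further to $SU(2)$, i.e.\ that $\xi_1^\perp$ admits a unit section. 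The obstructions lie in $H^6(M;\Z)$ and $H^7(M;\Z/2)$; since $M$ is closed, $7$-dimensional and spin (so $\chi(M)=0$ and all $w_i(M)=0$, the Wu classes of a $7$-manifold vanishing once $w_1=w_2=0$) and $\dim M\equiv 3\pmod 4$, the obstruction theory for fields of $2$-frames yields such a $\xi_2$. This is the step with genuine content; everything after it is formal.

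\emph{Construction.} Set $\phi_2(X):=\xi_2\t X$ and $\eta_2(X):=g_\vp(\xi_2,X)$, so that $(\phi_2,\xi_2,\eta_2,g_\vp)$ is again a standard $G_2$ almost contact metric structure; put $\xi_3:=\xi_1\t\xi_2$ and $\eta_3(X):=g_\vp(\xi_3,X)$. By \eq{cp1} and \eq{g21}, $\xi_3$ is a unit vector field orthogonal to $\xi_1$ and $\xi_2$, so $g_\vp(\xi_a,\xi_b)=\de_{ab}$. For the third endomorphism one must resist taking $\xi_3\t\,\cdot\,$: that operator differs from what is needed by (twice) the endomorphism associated via $g_\vp$ to the $2$-form $\xi_1\lrcorner\xi_2\lrcorner\star\vp$, which is non-zero in general. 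Instead one \emph{defines} $\phi_3:=\phi_1\phi_2-\eta_2\ot\xi_1$, the algebraically natural candidate forced by Kuo's relation $\phi_c=\phi_a\phi_b-\eta_b\ot\xi_a$.

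\emph{Verification.} From \eq{g21} and the skew-symmetry of $\t$ one reads off $\eta_1\ci\phi_2=\eta_3=-\eta_2\ci\phi_1$ and $\phi_1\xi_2=\xi_3=-\phi_2\xi_1$, while polarizing \eq{cp1} in its repeated argument (and using $g_\vp(\xi_1,\xi_2)=0$) gives the anticommutator identity $\phi_1\phi_2+\phi_2\phi_1=\eta_1\ot\xi_2+\eta_2\ot\xi_1$; here the $\star\vp$-terms cancel upon symmetrizing, which is the one non-obvious cancellation. Combining these with $\phi_a^2=-I+\eta_a\ot\xi_a$ and the compatibility \eq{acms1} for $a=1,2$, a direct computation yields $\phi_3^2=-I+\eta_3\ot\xi_3$, $\eta_3(\xi_3)=1$ and $g_\vp(\phi_3X,\phi_3Y)=g_\vp(X,Y)-\eta_3(X)\eta_3(Y)$, so that $(\phi_3,\xi_3,\eta_3,g_\vp)$ is an almost contact metric structure; the remaining relations of an almost contact $3$-structure (for instance $\phi_1=\phi_2\phi_3-\eta_3\ot\xi_2$) follow by substituting the definition of $\phi_3$ and using the anticommutator identity together with $\phi_2^2=-I+\eta_2\ot\xi_2$. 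Thus $\{(\phi_a,\xi_a,\eta_a)\}_{a=1,2,3}$ is an almost contact metric $3$-structure compatible with $g_\vp$, as claimed. The single real obstacle in the whole argument is producing $\xi_2$, i.e.\ showing that every closed $7$-manifold with $G_2$-structure (equivalently, every closed spin $7$-manifold) carries two pointwise linearly independent vector fields; given that, the cross product upgrades the pair to the associative frame $(\xi_1,\xi_2,\xi_1\t\xi_2)$ and the rest is bookkeeping.
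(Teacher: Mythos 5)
Your proposal is correct and takes essentially the same route as the paper: both rest on the topological fact that a closed spin $7$-manifold carries two pointwise linearly independent vector fields (the paper cites Thomas for this; your obstruction-theory sketch is the same input), pass to an orthonormal pair $(\xi_1,\xi_2)$ (the paper by replacing $v$ with $u\t v/\lVert u\t v\rVert$, you by building orthogonality into the obstruction problem), and reduce everything to Kuo's single condition $\phi_1\phi_2-\eta_2\ot\xi_1=-\phi_2\phi_1+\eta_1\ot\xi_2$. The only real difference is cosmetic: the paper verifies this condition by expanding a quadruple cross-product identity for $u\t((u\t v)\t X)$, whereas you obtain the equivalent anticommutator relation $\phi_1\phi_2+\phi_2\phi_1=\eta_1\ot\xi_2+\eta_2\ot\xi_1$ by polarizing \eq{cp1} in its repeated argument, which is a slightly cleaner derivation of the same identity.
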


\begin{coro}
 On a closed $G_2$-manifold $(M,\vp)$ with full $G_2$-holonomy, the almost contact metric $3$-structure constructed above cannot be an almost cosymplectic $3$-structure and hence cannot be a cosymplectic $3$-structure.
\end{coro}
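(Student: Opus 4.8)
The plan is to reduce the statement to the single-structure result already obtained in the excerpt, namely the Corollary asserting that on a $G_2$-manifold with full $G_2$-holonomy the standard $G_2$ almost contact metric structure is never almost cosymplectic. Recall from Section $4$ that an almost contact metric $3$-structure consists of three almost contact metric structures $(\phi_\al,\xi_\al,\eta_\al,g)$, $\al=1,2,3$, tied together by the quaternionic-type relations, and that such a $3$-structure is called \emph{almost cosymplectic} exactly when each constituent $(\phi_\al,\xi_\al,\eta_\al,g)$ is almost cosymplectic, i.e. $\d\eta_\al=0$ and $\d\om_\al=0$ for $\al=1,2,3$; similarly it is \emph{cosymplectic} when each constituent is cosymplectic. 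Since a cosymplectic almost contact metric structure is in particular almost cosymplectic, a cosymplectic $3$-structure is an almost cosymplectic $3$-structure, so it suffices to rule out the almost cosymplectic case.

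The first step is to note that, by construction, each of the three constituents of the $3$-structure produced in the previous Theorem is the standard $G_2$ almost contact metric structure attached to a unit vector field $\xi_\al$, that is, $\phi_\al(X)=\xi_\al\t X$ and $\eta_\al(X)=g_\vp(\xi_\al,X)$, with $\om_\al(X,Y)=g_\vp(X,\phi_\al Y)$. (On a $G_2$-manifold with full $G_2$-holonomy, which is simply connected, the auxiliary vector fields used in that construction exist, the $\phi_\al$ are the cross-product operators, and the pointwise orthonormal triple $\xi_1,\xi_2,\xi_3$ with $\xi_3=\xi_1\t\xi_2$ spans an associative $3$-plane, so the $3$-structure identities hold.)

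The second step is to apply the earlier single-structure Corollary to, say, $(\phi_1,\xi_1,\eta_1,g_\vp)$: on a full-holonomy $G_2$-manifold it cannot be almost cosymplectic, so $\d\om_1\neq 0$, and hence the $3$-structure is not almost cosymplectic; being cosymplectic would force it to be almost cosymplectic, so it is not cosymplectic either. Equivalently, one can argue directly from the Theorem of Section $3$: were the $3$-structure almost cosymplectic we would have $\d\om_1=0$, hence $\na\xi_1=0$, i.e. $\xi_1$ would be a nonzero parallel vector field, which is impossible on a $G_2$-manifold with full $G_2$-holonomy by the discussion in the Introduction. By symmetry the same reasoning applies to $\xi_2$ and $\xi_3$, so in fact none of the three constituents is almost cosymplectic.

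I do not anticipate a substantive obstacle; the points that need care are purely bookkeeping: (i) extracting from Section $4$ the precise meaning of an almost cosymplectic (resp. cosymplectic) $3$-structure, so that it indeed reduces to a condition on each constituent rather than to some weaker joint condition, and (ii) confirming that the $3$-structure furnished by the previous Theorem genuinely has standard $G_2$ constituents, so that the single-structure Corollary of Section $3$ can be invoked verbatim for $(\phi_1,\xi_1,\eta_1,g_\vp)$. Once these are in place the corollary follows at once from the material already developed.
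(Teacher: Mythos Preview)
Your approach is essentially the paper's: reduce to the single-structure Corollary~\ref{G2manifold} by observing that an almost cosymplectic $3$-structure would in particular force $(\phi_1,\xi_1,\eta_1,g)$ to be almost cosymplectic, hence $\d\om_1=0$, hence $\na\xi_1=0$, contradicting full $G_2$-holonomy. The paper itself omits the proof and merely says it ``is similar to the proof of Corollary~\ref{G2manifold}.''

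One inaccuracy to correct in your write-up: you assert that \emph{each} of the three constituents is a standard $G_2$ almost contact metric structure, i.e.\ $\phi_\al(X)=\xi_\al\t X$. This is true for $\al=1,2$ by construction, but the paper explicitly warns that it fails for $(\phi_3,\xi_3,\eta_3)$: since $\phi_3=\phi_1\phi_2-\eta_2\ot\xi_1$, one does \emph{not} have $\phi_3(X)=\xi_3\t X$ (the octonionic cross product is not associative), and the paper states plainly that ``we cannot use the results of the previous section on this almost contact metric structure.'' Fortunately your actual argument only invokes the single-structure Corollary for $(\phi_1,\xi_1,\eta_1,g)$, which \emph{is} standard, so the proof goes through unchanged once you drop the erroneous claim about the third constituent (and likewise drop the ``by symmetry'' remark for $\xi_3$).
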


\begin{coro}
 On a closed $G_2$-manifold $(M,\vp)$ the almost contact metric $3$-structure constructed above cannot be a contact metric $3$-structure (equivalently, it cannot be $3$-Sasakian).
\end{coro}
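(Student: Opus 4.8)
The plan is to reduce this statement to the Corollary already proved for a single standard $G_2$ almost contact metric structure. Recall that, by definition, a \emph{contact metric $3$-structure} is an almost contact metric $3$-structure $(\phi_i,\xi_i,\eta_i,g)$, $i=1,2,3$, each of whose three members is a contact metric structure, i.e.\ satisfies $\d\eta_i=\om_i$ with $\om_i(X,Y)=g(X,\phi_iY)$; similarly a $3$-Sasakian structure is one all three of whose members are Sasakian, and a Sasakian structure is in particular a contact metric structure. Hence it suffices to rule out the first, weaker possibility. The key structural observation is that the almost contact metric $3$-structure produced in Section~$5$ (the Theorem above stating that every closed $7$-manifold with $G_2$-structure admits an almost contact metric $3$-structure compatible with $g_{\vp}$) is built so that each member $(\phi_i,\xi_i,\eta_i,g_{\vp})$ arises as in \eq{g2acms1} from a globally defined unit vector field $\xi_i$, namely $\phi_i(X)=\xi_i\t X$ and $\eta_i=g_{\vp}(\xi_i,\cdot)$; that is, each member is itself a standard $G_2$ almost contact metric structure.

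Granting this, the argument is immediate: if the constructed $3$-structure on the closed $G_2$-manifold $(M,\vp)$ were a contact metric $3$-structure, then in particular $(\phi_1,\xi_1,\eta_1,g_{\vp})$ would be a contact metric structure, contradicting the Corollary asserting that a standard $G_2$ almost contact metric structure on a $G_2$-manifold is never a contact metric structure. Consequently the $3$-structure is not a contact metric $3$-structure, and therefore not $3$-Sasakian either. (The same conclusion in the $3$-Sasakian case can also be read off directly from the Sasakian clause of that same Corollary, so the two halves of the statement reduce to it uniformly.)

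Alternatively, a self-contained proof simply repeats the computation from the Introduction: were $(\phi_1,\xi_1,\eta_1,g_{\vp})$ a contact metric structure, then $\d\eta_1=\om_1=-\,\xi_1\lrcorner\vp$, so using \eq{g2metric} with $X=Y=\xi_1$ together with $\d\vp=0$ one finds
\begin{equation*}
 Vol(M)=\frac{1}{6}\int_M(\xi_1\lrcorner\vp)\w(\xi_1\lrcorner\vp)\w\vp=\frac{1}{6}\int_M\d(\eta_1\w\d\eta_1\w\vp)=\frac{1}{6}\int_{\pd M}(\eta_1\w\d\eta_1\w\vp)=0,
\end{equation*}
which is absurd since $M$ is closed. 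The main (indeed only) point that needs care is the bookkeeping of the Section~$5$ construction: one must confirm that the three pieces of the $3$-structure are genuinely of ``standard $G_2$'' type, so that the earlier Corollary applies to each of them verbatim; beyond that, no new geometric input is required.
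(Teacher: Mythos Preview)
Your approach matches the paper's: the paper simply says the result follows directly from Corollary~\ref{sasakian}, and your reduction to that corollary via the first member $(\phi_1,\xi_1,\eta_1,g_{\vp})$ is exactly right.

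One factual claim should be corrected, however. You assert that \emph{each} of the three members is a standard $G_2$ almost contact metric structure, i.e.\ has $\phi_i(X)=\xi_i\t X$. This is true for $i=1,2$ by construction, but not for $i=3$: the third member is defined by $\phi_3=\phi_1\phi_2-\eta_2\ot\xi_1$, and the paper explicitly remarks in Section~5 that ``we cannot use the results of the previous section on this almost contact metric structure, though they do apply to the almost contact metric structures $(\phi_i,\xi_i,\eta_i)$, $i=1,2$.'' (In general the $G_2$ cross product is not associative, so $\phi_1\phi_2(X)-\eta_2(X)\xi_1$ need not equal $(\xi_1\t\xi_2)\t X$.) Fortunately this does not damage your argument, since you only actually invoke Corollary~\ref{sasakian} for $i=1$, and a single member failing to be contact metric already rules out a contact metric $3$-structure. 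Just drop the blanket claim about ``each member'' and the closing sentence about confirming all three pieces; it suffices to note that $(\phi_1,\xi_1,\eta_1,g_{\vp})$ is standard.
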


\section{Classification of Almost Contact Metric Structures}
\label{CACMS}
In this section, we review the classification of almost contact metric structures given by Chinea and Gonzalez \cite{ChGo}.

As mentioned above, one can require many different types of conditions of a given almost contact metric structure. Of particular importance are normal almost contact structures. Let $(M,\phi,\xi,\eta)$ be an almost contact manifold. Then on $M\t\R$ we can define the almost complex structure $J$ given by 
\begin{equation}
 J\left(X,f\frac{\d}{\d t}\right)=\left(\phi X-f\xi, \eta(X)\frac{\d}{\d t}\right)
\end{equation}
Using the Nijenhuis torsion tensor defined for any $(1,1)$-tensor by
\begin{equation}
 [T,T](X,Y)=T^2[X,Y]-T[TX,Y]-T[X,TY]+[TX,TY]
\end{equation}
where the bracket on the right-hand side of the formula is the standard Lie bracket of vector fields, it is straight-forward to show that the integrability of $J$, that is, the vanishing of $[J,J]$, is equivalent to the simultaneous vanishing of the following four tensors on $M$:
\begin{equation}
 N^{(1)}(X,Y)=[\phi,\phi](X,Y)+2\d\eta(X,Y)\xi
\label{nacs1}
\end{equation}
where $[\phi,\phi]$ is the Nijenhuis torsion tensor of $\phi$;
\begin{equation}
 N^{(2)}(X,Y)=(\L_{\phi X}\eta)Y-(\L_{\phi Y}\eta)X
\label{nacs2}
\end{equation}
where $\L$ is the Lie derivative operator;
\begin{equation}
 N^{(3)}(X)=(\L_{\xi}\phi)X
\label{nacs3}
\end{equation}
and
\begin{equation}
 N^{(4)}=(\L_{\xi}\eta)X
\label{nacs4}
\end{equation}
Moreover, it can be shown that if $N^{(1)}$ is identically zero then $N^{(2)}$, $N^{(3)}$ and $N^{(4)}$ must be identically zero as well \cite{Blair2}. Thus, the integrability of $J$ is equivalent to the vanishing of the $(1,2)$-tensor $N^{(1)}$, and in this case, that is, if $N^{(1)}\equiv 0$, we call the almost contact structure $(\phi,\xi,\eta)$ \emph{normal}.

Other types of almost contact metric manifolds that have appeared in the literature are given as follows \cite{ChGo}:
\begin{itemize}
 \item {\it Almost cosymplectic:} $\d\om=0=\d\eta$
 \item {\it Cosymplectic:} almost cosymplectic and normal
 \item {\it Quasi-Sasakian:} $\d\om=0$ and normal
 \item {\it Almost $a$-Kenmotsu:} $\d\eta=0$ and $\d\om=\frac{2}{3}a\left[\eta(X)\om(Y,Z)+\eta(Y)\om(Z,X)+\eta(Z)\om(X,Y)\right]$ where $a$ is a differentiable function on $M$
 \item {\it $a$-Kenmotsu:} almost $a$-Kenmotsu and normal
 \item {\it Almost $a$-Sasakian:} $a\om=\d\eta$ where $a$ is a differentiable function on $M$
 \item {\it $a$-Sasakian:} Almost $a$-Sasakian and normal
 \item {\it Nearly $K$-cosymplectic:} $(\na_X\phi)Y+(\na_Y\phi)X=0$ and $\na_X\xi=0$ for all vector fields $X,Y$.
 \item {\it Quasi-$K$-cosymplectic:} $(\na_X\phi)Y+(\na_{\phi X}\phi)(\phi Y)=\eta(Y)\na_{\phi X}\xi$
 \item {\it Semi-cosymplectic:} $\de\om=0=\de\eta$
 \item {\it Trans-Sasakian:} \[(\na_X\om)(Y,Z)=-\frac{1}{2n}\left[\left(g(X,Y)\eta(Z)-g(X,Z)\eta(Y)\right)\de\om(\xi)+\left(g(X,\phi Y)\eta(Z)-g(X,\phi Z)\eta(Y)\right)\de\eta\right]\]
 \item {\it Nearly trans-Sasakian:} $(\na_X\om)(X,Y)=-\frac{1}{2n}\left[g(X,X)\de\om(Y)-g(X,Y)\de\om(X)+g(\phi X,Y)\eta(Y)\de\eta\right]$ and $(\na_X\eta)Y=-\frac{1}{2n}\left[g(\phi X,\phi Y)\de\eta+g(\phi X,Y)\de\om(\xi)\right]$
 \item {\it Almost $K$-contact:} $\na_{\xi}\phi=0$
\end{itemize}

The classification of Chinea and Gonzalez depends on a decomposition of the space $\mathcal{C}$ of covariant tensors of degree $3$ which possess the same symmetry properties as $\na\om$, the covariant derivative of the fundamental $2$-form \eq{acms3}. As we will need various pieces of their theory, we give here a sketch of the details of their decomposition and refer the reader to \cite{ChGo} for the full proof.

Let $V$ denote a real vector space of dimension $2n+1$ with an almost contact structure $(\phi,\xi,\eta)$ and a compatible metric $\langle\cdot,\cdot\rangle$. Then $\CV$ is a subspace of the space of covariant tensors of degree $3$, denoted $\bigot_3^0V$, defined by
\begin{equation}
 \mathcal{C}(V)=\left\{\al\in\bigot_3^0V:\al(x,y,z)=-\al(x,z,y)=-\al(x,\phi y,\phi z)+\eta(y)\al(x,\xi,z)+\eta(z)\al(x,y,\xi)\right\}
\end{equation}
Then the space of quadratic invariants of $\CV$ is generated by the following $18$ invariants:
\small
\begin{multicols}{2}
\begin{equation*}
\begin{split}
 &i_1(\al)=\sum_{i,j,k}\al(e_i,e_j,e_k)^2;\\
 &i_2(\al)=\sum_{i,j,k}\al(e_i,e_j,e_k)\al(e_j,e_i,e_k);\\
 &i_3(\al)=\sum_{i,j,k}\al(e_i,e_j,e_k)\al(\phi e_i,\phi e_j,e_k);\\
 &i_4(\al)=\sum_{i,j,k}\al(e_i,e_i,e_k)\al(e_j,e_j,e_k);\\
 &i_5(\al)=\sum_{j,k}\al(\xi,e_j,e_k)^2;\\
 &i_6(\al)=\sum_{i,k}\al(e_i,\xi,e_k)^2;\\
 &i_7(\al)=\sum_{j,k}\al(\xi,e_j,e_k)\al(e_j,\xi,e_k);\\
 &i_8(\al)=\sum_{i,j}\al(e_i,e_j,\xi)\al(e_j,e_i,\xi);\\
 &i_9(\al)=\sum_{i,j}\al(e_i,e_j,\xi)\al(\phi e_i,\phi e_j,\xi);\\
\end{split}
\end{equation*}

\begin{equation*}
\begin{split}
 &i_{10}(\al)=\sum_{i,j}\al(e_i,e_i,\xi)\al(e_j,e_j,\xi);\\
 &i_{11}(\al)=\sum_{i,j}\al(e_i,e_j,\xi)\al(e_j,\phi e_i,\xi);\\
 &i_{12}(\al)=\sum_{i,j}\al(e_i,e_j,\xi)\al(\phi e_j,\phi e_i,\xi);\\
 &i_{13}(\al)=\sum_{j,k}\al(\xi,e_j,e_k)\al(\phi e_j,\xi,e_k);\\
 &i_{14}(\al)=\sum_{i,j}\al(e_i,\phi e_i,\xi)\al(e_j,\phi e_j,\xi);\\
 &i_{15}(\al)=\sum_{i,j}\al(e_i,\phi e_i,\xi)\al(e_j,e_i,\xi);\\
 &i_{16}(\al)=\sum_{k}\al(\xi,\xi,e_k)^2;\\
 &i_{17}(\al)=\sum_{i,k}\al(e_i,e_i,e_k)\al(\xi,\xi,e_k);\\
 &i_{18}(\al)=\sum_{i,k}\al(e_i,e_i,\phi e_k)\al(\xi,\xi,e_k);\\
\end{split}
\end{equation*}
\end{multicols}
\normalsize
where $\{e_1,\ldots,e_{2n},\xi\}$ is an orthonormal basis of $V$ and $\al\in\CV$. They then give a decomposition of $\CV$ into orthogonal irreducible factors where orthogonality is with respect to the inner product given by
\begin{equation}
 \langle\al,\tilde{\al}\rangle=\sum_{i,j,k=1}^{2n+1}\al(e_i,e_j,e_k)\tilde{\al}(e_i,e_j,e_k)
\end{equation}
where $\al,\tilde{\al}\in\CV$ and $\{e_i\}$ is an orthonormal basis of $V$. First, they decompose $\CV$ into three orthogonal subspaces $\mathcal{D}_1$, $\mathcal{D}_2$ and $\mathcal{C}_{12}$ defined by
\begin{equation}
 \mathcal{D}_1=\{\al\in\CV:\al(\xi,x,y)=\al(x,\xi,y)=0\}
\label{d1}
\end{equation}
\begin{equation}
 \mathcal{D}_2=\{\al\in\CV:\al(x,y,z)=\eta(x)\al(\xi,y,z)+\eta(y)\al(x,\xi,z)+\eta(z)\al(x,y,\xi)\}
\label{d2}
\end{equation}
\begin{equation}
 \mathcal{C}_{12}=\{\al\in\CV:\al(x,y,z)=\eta(x)\eta(y)\al(\xi,\xi,z)+\eta(x)\eta(z)\al(\xi,y,\xi)\}
\label{c12}
\end{equation}
Next, they further decompose $\mathcal{D}_1$ into four orthogonal irreducible subspaces given by
\begin{equation}
 \mathcal{C}_1=\{\al\in\CV:\al(x,x,y)=\al(x,y,\xi)=0\}
\label{c1}
\end{equation}
\begin{equation}
 \mathcal{C}_2=\{\al\in\CV:\al(x,y,z)+\al(y,z,x)+\al(z,x,y)=0\text{, }\al(x,y,\xi)=0\}
\label{c2}
\end{equation}
\begin{equation}
 \mathcal{C}_3=\{\al\in\CV:\al(x,y,z)-\al(\phi x,\phi y,z)=0\text{, }c_{12}\al=0\}
\label{c3}
\end{equation}
\begin{equation}
 \begin{split}
 \mathcal{C}_4=\{\al\in\CV:&\al(x,y,z)=\frac{1}{2(n-1)}[(\langle x,y\rangle-\eta(x)\eta(y))c_{12}\al(z)-(\langle x,z\rangle-\eta(x)\eta(z))c_{12}\al(y)\\
 &-\langle x,\phi y\rangle c_{12}\al(\phi z)+\langle x,\phi z\rangle c_{12}\al(\phi y)]\text{, }c_{12}\al(\xi)=0\}\\
 \end{split}
\end{equation}
for any $x,y,z\in V$ and $c_{12}\al(x)=\sum\al(e_i,e_i,x)$ where $\{e_i\}$ is an orthonormal basis of $V$. Finally, they decompose $\mathcal{D}_2$ into seven orthogonal irreducible subspaces given by
\begin{equation}
 \mathcal{C}_5=\{\al\in\CV:\al(x,y,z)=\frac{1}{2n}[\langle x,\phi z\rangle\eta(y)\ov{c}_{12}\al(\xi)-\langle x,\phi y\rangle\eta(z)\ov{c}_{12}\al(\xi)]\}
\label{c5}
\end{equation}
\begin{equation}
 \mathcal{C}_6=\{\al\in\CV:\al(x,y,z)=\frac{1}{2n}[\langle x,y\rangle\eta(z)c_{12}\al(\xi)-\langle x,z\rangle\eta(y)c_{12}\al(\xi)]\}
\label{c6}
\end{equation}
\begin{equation}
 \mathcal{C}_7=\{\al\in\CV:\al(x,y,z)=\eta(z)\al(y,x,\xi)-\eta(y)\al(\phi x,\phi z,\xi)\text{, }c_{12}\al(\xi)=0\}
\label{c7}
\end{equation}
\begin{equation}
 \mathcal{C}_8=\{\al\in\CV:al(x,y,z)=-\eta(z)\al(y,x,\xi)-\eta(y)\al(\phi x,\phi z,\xi)\text{, }\ov{c}_{12}\al(\xi)=0\}
\label{c8}
\end{equation}
\begin{equation}
 \mathcal{C}_9=\{\al\in\CV:\al(x,y,z)=\eta(z)\al(y,x,\xi)+\eta(y)\al(\phi x,\phi z,\xi)\}
\label{c9}
\end{equation}
\begin{equation}
 \mathcal{C}_{10}=\{\al\in\CV:\al(x,y,z)=-\eta(z)\al(y,x,\xi)+\eta(y)\al(\phi x,\phi z,\xi)\}
\label{c10}
\end{equation}
\begin{equation}
 \mathcal{C}_{11}=\{\al\in\CV:\al(x,y,z)=-\eta(x)\al(\xi,\phi y,\phi z)\}
\label{c11}
\end{equation}
for any $x,y,z\in V$ where $\ov{c}_{12}(\xi)=\sum\al(e_i,\phi e_i,\xi)$ and $\{e_i\}$ is an orthonormal basis of $V$. Thus, we have $12$ invariant subspaces $\mathcal{C}_i$ of $\CV$ which are orthogonal and irreducible with respect to the $U(n)\t 1$ action.

Using the definition of the inner product on $\CV$ together the quadratic invariants, we see that
\begin{equation}
 ||\al||^2=i_1(\al)+i_5(\al)+2i_6(\al)+i_{16}(\al)
\end{equation}
\begin{equation}
 ||c_{12}(\al)||^2=i_4(\al)+i_{10}(\al)+i_{16}(\al)+2i_{17}(\al)
\end{equation}
\begin{equation}
 ||\ov{c}_{12}(\al)||^2=i_4(\al)+i_{14}(\al)
\label{codiffeta}
\end{equation}
Then there exist linear relations among the quadratic invariants for each of the subspaces $C_{i}$ which are used to prove that each of the subspaces is irreducible. These relations will prove useful to us, so we give them below. Let $A=\{1,2,3,4,5,7,11,13,15,16,17,18\}$.
 \begin{equation*}
  \begin{split}
   \mathcal{C}_1\text{: }\text{ }\text{ }&i_1(\al)=-i_2(\al)=-i_3(\al)=||\al||^2\text{; }i_m(\al)=0\text{ for }m\geq 4;\\
   \mathcal{C}_2\text{: }\text{ }\text{ }&i_1(\al)=2i_2(\al)=-i_3(\al)=||\al||^2\text{; }i_m(\al)=0\text{ for }m\geq 4;\\
   \mathcal{C}_3\text{: }\text{ }\text{ }&i_1(\al)=i_3(\al)=||\al||^2\text{; }i_2(\al)=i_m(\al)=0\text{ for }m\geq 4;\\
   \mathcal{C}_4\text{: }\text{ }\text{ }&i_1(\al)=i_3(\al)=\frac{n}{(n-1)^2}i_4(\al)=\frac{n}{(n-1)^2}\sum_k^{2n}c_{12}^2(\al)(e_k)\text{; }i_2(\al)=i_m(\al)=0\text{ for }m>4;\\
   \mathcal{C}_5\text{: }\text{ }\text{ }&i_6(\al)=-i_8(\al)=i_9(\al)=-i_{12}(\al)=\frac{1}{2n}i_{14}(\al)\text{; }i_{10}(\al)=i_m(\al)=0\text{ for }m\in A;\\
   \mathcal{C}_6\text{: }\text{ }\text{ }&i_6(\al)=i_8(\al)=i_9(\al)=i_{12}(\al)=\frac{1}{2n}i_{10}(\al)\text{; }i_{14}(\al)=i_m(\al)=0\text{ for }m\in A;\\
   \mathcal{C}_7\text{: }\text{ }\text{ }&i_6(\al)=i_8(\al)=i_9(\al)=-i_{12}(\al)=\frac{||\al||^2}{2}\text{; }i_{10}(\al)=i_{14}(\al)=i_m(\al)\text{ for }m\in A;\\
   \mathcal{C}_8\text{: }\text{ }\text{ }&i_6(\al)=-i_8(\al)=i_9(\al)=-i_{12}(\al)=\frac{||\al||^2}{2}\text{; }i_{10}(\al)=i_{14}(\al)=i_m(\al)\text{ for }m\in A;\\
   \mathcal{C}_9\text{: }\text{ }\text{ }&i_6(\al)=i_8(\al)=-i_9(\al)=-i_{12}(\al)=\frac{||\al||^2}{2}\text{; }i_{10}(\al)=i_{14}(\al)=i_m(\al)\text{ for }m\in A;\\
   \mathcal{C}_{10}\text{: }\text{ }\text{ }&i_6(\al)=-i_8(\al)=-i_9(\al)=i_{12}(\al)=\frac{||\al||^2}{2}\text{; }i_{10}(\al)=i_{14}(\al)=i_m(\al)\text{ for }m\in A;\\
   \mathcal{C}_{11}\text{: }\text{ }\text{ }&i_5(\al)=||\al||^2\text{; }i_m(\al)=0\text{ for }m\neq 5;\\
   \mathcal{C}_{12}\text{: }\text{ }\text{ }&i_{16}(\al)=||\al||^2\text{; }i_m(\al)=0\text{ for }m\neq 16;\\
  \end{split}
 \end{equation*}

Now, let $M$ be an odd-dimensional manifold with an almost contact metric structure; then, for all $p\in M$, $T_pM$ is an odd-dimensional vector space with the almost contact metric structure $(\phi_p,\xi_p,\eta_p,g_p)$. Hence, we can decompose the vector space $\mathcal{C}(T_pM)$ according to the decomposition above, and we say that the almost contact structure is of class $\mathcal{C}_j$, where $\mathcal{C}_j$ is one of the invariant subspaces of $\mathcal{C}(T_pM)$, if $(\na\om)_p\in\mathcal{C}_j$ for all $p\in M$. Note that $\al(x,y,z)$ from above, becomes $(\na_X\om)(Y,Z)$ for vector fields $X,Y,Z$ on $M$, that $c_{12}(\na\om)(X)=-\de\om(X)$ and that $\ov{c}_{12}(\na\om)(\xi)=\de\eta$ where we have used the formula \begin{equation}
\de\cdot=-\sum_{i=1}^ne_i\lrcorner(\na_{e_i}\cdot)
\end{equation}
for the codifferential where $\{e_i\}$ is an orthonormal frame.

Chinea and Gonzalez then give the relationship between the classes $\mathcal{C}_i$ and the various types of almost contact metric manifolds defined above. Again, we will refer to various of these relationships in our classification, so we will give them here.
\begin{itemize}
 \item {\it Almost cosymplectic:} $\mathcal{C}_2\op\mathcal{C}_9$
 \item {\it Quasi-Sasakian:} $\mathcal{C}_6\op\mathcal{C}_7$
 \item {\it $a$-Kenmotsu:} $\mathcal{C}_5$
 \item {\it $a$-Sasakian:} $\mathcal{C}_6$
 \item {\it Nearly $K$-cosymplectic:} $\mathcal{C}_1$
 \item {\it Quasi-$K$-cosymplectic:} $\mathcal{C}_1\op\mathcal{C}_2\op\mathcal{C}_9\op\mathcal{C}_{10}$
 \item {\it Semi-cosymplectic:} $\mathcal{C}_1\op\mathcal{C}_2\op\mathcal{C}_3\op\mathcal{C}_7\op\mathcal{C}_8\op\mathcal{C}_9\op\mathcal{C}_{10}\op\mathcal{C}_{11}\op\mathcal{C}_{12}$
 \item {\it Trans-Sasakian:} $\mathcal{C}_5\op\mathcal{C}_6$
 \item {\it Nearly trans-Sasakian:} $\mathcal{C}_1\op\mathcal{C}_5\op\mathcal{C}_6$
 \item {\it Almost $K$-contact:} $\bigop_{j=1}^{10}\mathcal{C}_j$
 \item {\it Normal:} $\mathcal{C}_3\op\mathcal{C}_4\op\mathcal{C}_5\op\mathcal{C}_6\op\mathcal{C}_7\op\mathcal{C}_8$
 \end{itemize}


\section{Classification Theorems for the Standard $G_2$ Almost Contact Metric Structure on $G_2$-Manifolds}

Throughout this section $(M,\vp)$ will denote a closed, i. e., compact with empty boundary, $G_2$-manifold with the standard $G_2$ almost contact structure $(\phi,\xi,\eta)$ and $\{e_1,e_2,e_3,e_4,e_5,e_6,\xi\}$ will denote a local orthonormal frame about an arbitrary point. Also, we denote the $G_2$-metric $g_{\vp}$ simply by $g$ as it is the only metric under consideration in this section.

We begin by noting some helpful formulae. Recall that on a $G_2$-manifold, the cross product is parallel, so 
\begin{equation}
 (\na_X\phi)(Y)=\na_X(\phi Y)-\phi(\na_XY)=\na_X(\xi\t Y)-\xi\t(\na_XY)=\na_X\xi\t Y
\label{nablaphi}
\end{equation}
In particular, $\na\phi=0$ if and only if $\na\xi=0$. Further, since $g(\xi,\xi)=1$, we find
\begin{equation}
 0=\na_Xg(\xi,\xi)=2g(\na_X\xi,\xi)
\end{equation}
which implies that $\na_X\xi$ is perpendicular to $\xi$ for all vector fields $X$ on $M$, and finally, for any vector fields $X,Y,Z$ on $M$, we have
\begin{equation}
\begin{split}
 (\na_X\om)(Y,Z)&=\na_X\om(Y,Z)-\om(\na_XY,Z)-\om(Y,\na_XZ)=\na_Xg(Y,\phi Z)-g(\na_XY,\phi Z)-g(Y,\phi(\na_XZ))\\
 &=g(\na_XY,\phi Z)+g(Y,(\na_X\phi)Z)+g(Y,\phi(\na_XZ))-g(\na_XY,\phi Z)-g(Y,\phi(\na_XZ))\\
 &=g(Y,\na_X\xi\t Z)\\
\end{split}
\end{equation}

\begin{thm}
$\d\om=0$ if and only if $\na\xi=0$. Further, in this case, $\d\eta=0$, and the almost contact structure is normal and hence is cosymplectic.
\label{cosymplectic}
\end{thm}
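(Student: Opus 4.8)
The key computational input is the formula $(\na_X\om)(Y,Z)=g(Y,\na_X\xi\t Z)$ derived just above the statement. The plan is to show the chain $\d\om=0 \iff \na\xi=0$, after which the remaining assertions follow quickly. One direction is trivial: if $\na\xi=0$, then $(\na_X\om)(Y,Z)=g(Y,\na_X\xi\t Z)=0$ for all $X,Y,Z$, so $\na\om=0$ and in particular $\d\om=0$ (the exterior derivative of $\om$ is the alternation of $\na\om$). For the converse, suppose $\d\om=0$. First I would record the alternation identity
\begin{equation*}
\d\om(X,Y,Z)=(\na_X\om)(Y,Z)+(\na_Y\om)(Z,X)+(\na_Z\om)(X,Y),
\end{equation*}
and substitute the formula above to get $g(Y,\na_X\xi\t Z)+g(Z,\na_Y\xi\t X)+g(X,\na_Z\xi\t Y)=0$. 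Using $g(Y,\na_X\xi\t Z)=\vp(\na_X\xi,Z,Y)=-\vp(\na_X\xi,Y,Z)$ and the total antisymmetry of $\vp$, this becomes a single cyclic-type relation among the three quantities $\vp(\na_X\xi,Y,Z)$, $\vp(\na_Y\xi,Z,X)$, $\vp(\na_Z\xi,X,Y)$; in fact all three are equal (each is $\vp$ evaluated on the cyclic rotation of the same three slots), so $\d\om=0$ forces $3\vp(\na_X\xi,Y,Z)=0$, i.e. $\vp(\na_X\xi,Y,Z)=0$ for all $Y,Z$.

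**Extracting $\na\xi=0$.** Now fix $X$ and set $v=\na_X\xi$. We know $v\perp\xi$ from the identity $0=\na_X g(\xi,\xi)=2g(\na_X\xi,\xi)$ recorded above. The condition $\vp(v,Y,Z)=0$ for all $Y,Z$ says that the $2$-form $v\lrcorner\vp$ vanishes identically. But for a $G_2$-structure, the map $v\mapsto v\lrcorner\vp$ from $T_pM$ to $\La^2 T_p^*M$ is injective — this is immediate from \eqref{g2metric} with $Y=X=v$, which gives $(v\lrcorner\vp)\w(v\lrcorner\vp)\w\vp=6g(v,v)\,\Vol_\vp$, so $v\lrcorner\vp=0$ implies $g(v,v)=0$ and hence $v=0$. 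Therefore $\na_X\xi=0$ for every $X$, i.e. $\na\xi=0$. This injectivity step is the conceptual heart of the argument; everything else is bookkeeping with antisymmetry.

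**The remaining assertions.** Assume now $\na\xi=0$. Then $\d\eta=0$: indeed $\d\eta(X,Y)=(\na_X\eta)Y-(\na_Y\eta)X$ and $(\na_X\eta)Y=\na_X g(\xi,Y)-g(\xi,\na_X Y)=g(\na_X\xi,Y)=0$. For normality I would use \eqref{nablaphi}: $\na\xi=0$ gives $\na\phi=0$, hence $\phi$ is parallel; then the Nijenhuis tensor $[\phi,\phi]$ can be expressed purely in terms of $\na\phi$ (replacing Lie brackets by covariant derivatives, the torsion-free condition lets one write $[\phi,\phi](X,Y)$ as a combination of $(\na_{\phi X}\phi)Y$, $(\na_X\phi)(\phi Y)$, etc.), all of which vanish; together with $\d\eta=0$ this gives $N^{(1)}=[\phi,\phi]+2\d\eta\ot\xi=0$, so the structure is normal. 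Finally, $\d\om=0$ and $\d\eta=0$ make the structure almost cosymplectic, and almost cosymplectic plus normal is by definition cosymplectic, which completes the proof. The only place I anticipate needing care is the clean identification of $[\phi,\phi]$ with covariant-derivative expressions; if that is fussier than expected, an alternative is to appeal directly to the classification, since $\na\om=0$ puts the structure in the zero class, which lies inside the normal sector $\mathcal{C}_3\op\mathcal{C}_4\op\mathcal{C}_5\op\mathcal{C}_6\op\mathcal{C}_7\op\mathcal{C}_8$ and inside the almost cosymplectic sector $\mathcal{C}_2\op\mathcal{C}_9$, whence cosymplectic.
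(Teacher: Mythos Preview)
Your argument for the implication $\d\om=0\Rightarrow\na\xi=0$ contains a genuine error. After writing the cyclic sum
\[
\vp(\na_X\xi,Y,Z)+\vp(\na_Y\xi,Z,X)+\vp(\na_Z\xi,X,Y)=0,
\]
you assert that the three terms are equal because ``each is $\vp$ evaluated on the cyclic rotation of the same three slots''. This is false: the first slot in each term is $\na_X\xi$, $\na_Y\xi$, $\na_Z\xi$ respectively, and these are \emph{different} vectors, not a cyclic permutation of $(X,Y,Z)$. A simple counterexample at a point: if $\na_{e_1}\xi=e_1$ and $\na_{e_2}\xi=\na_{e_3}\xi=0$, then with $(X,Y,Z)=(e_1,e_2,e_3)$ the three terms are $\vp(e_1,e_2,e_3)=1$, $0$, $0$. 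So from the vanishing of this cyclic sum alone you cannot conclude $\vp(\na_X\xi,Y,Z)=0$, and the injectivity step after it never gets off the ground.

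In fact a purely pointwise argument cannot work here. The paper's proof is genuinely global: one observes $\om=-\xi\lrcorner\vp$, so on a $G_2$-manifold (where $\d\vp=0$) one has $-\d\om=\L_\xi\vp$; hence $\d\om=0$ is equivalent to $\xi$ preserving $\vp$, equivalently $\xi$ being Killing for $g_\vp$. The passage from Killing to parallel then uses that $M$ is Ricci-flat and closed (Bochner). Your remaining steps---$\d\eta=0$ from $\na\xi=0$, and normality from $\na\phi=0$ via the torsion-free expansion of $[\phi,\phi]$---are fine and match the paper's computations.
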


\begin{proof}
 We note that
 \begin{equation}
  \om(X,Y)=g(X,\phi Y)=g(X, \xi\t Y)=g(\xi\t Y, X)=\vp(\xi,Y,X)=-\vp(\xi,X,Y)=(-\xi\lrcorner\vp)(X,Y)
 \end{equation}
 Thus, using the fact that $\d\vp=0$, we see that
 \begin{equation}
  -d\om=\d(\xi\lrcorner\vp)=\xi\lrcorner\d\vp+\d(\xi\lrcorner\vp)=\L_{\xi}\vp
 \end{equation}
 From this, we conclude that $\d\om=0$ if and only if $\L_{\xi}\vp=0$; recall from the Introduction we have $\L_{\xi}\vp=0$ if and only if $\L_{\xi}g=0$, that is, if and only if $\xi$ is Killing with respect to the $G_2$-metric. Since $G_2$-manifolds are Ricci-flat, $\xi$ is Killing if and only if $\na\xi=0$.
 
 We assume for the remainder of this proof that $\na\xi=0$. Then 
 \begin{equation}
 \begin{split}
  2\d\eta(X,Y)&=X\eta(Y)-Y\eta(X)-\eta([X,Y])=Xg(\xi,Y)-Yg(\xi,X)-g(\xi,[X,Y])\\
  &=g(\na_X\xi,Y)+g(\xi,\na_XY)-g(\na_Y\xi,X)-g(\xi,\na_YX)-g(\xi,\na_XY-\na_YX)\\
  &=g(\xi,\na_XY-\na_YX-\na_XY+\na_YX)=0\\
 \end{split}
 \end{equation}
 
 Finally, to see that in this case the almost contact metric structure is normal, we first recall that $\na\xi=0$ implies $\na\phi=0$. Then calculating
\begin{equation}
 \begin{split}
  [\phi,\phi](X,Y)&+2\d\eta(X,Y)\xi\\
  =&\phi^2[X,Y]-\phi[\phi X,Y]-\phi[X,\phi Y]+[\phi X,\phi Y]\\
  =&-[X,Y]+\eta([X,Y])\xi-\phi[\phi X,Y]-\phi[X,\phi Y]+[\phi X,\phi Y]\\
  =&-[X,Y]+g(\xi,[X,Y])\xi-\phi[\phi X,Y]-\phi[X,\phi Y]+[\phi X,\phi Y]\\
  =&-[X,Y]+g(\xi,[X,Y])\xi-\phi\{\na_{\phi X}Y-(\na_Y\phi)X-\phi\na_YX\}-\phi\{(\na_X\phi)Y+\phi\na_XY-\na_{\phi Y}X\}\\
  &+(\na_{\phi X}\phi)Y+\phi\na_{\phi X}Y-(\na_{\phi Y}\phi)X-\phi\na_{\phi Y}X\\
  =&-[X,Y]+g(\xi,[X,Y])\xi-\phi\na_{\phi X}Y+\phi^2\na_YX-\phi^2\na_XY+\phi\na_{\phi Y}X+\phi\na_{\phi X}Y-\phi\na_{\phi Y}X\\
  =&-[X,Y]+g(\xi,[X,Y])\xi-\na_YX+\eta(\na_YX)\xi+\na_XY-\eta(\na_XY)\xi\\
  =&-\na_XY+\na_YX+\na_XY-\na_YX+g(\xi,\na_XY-\na_YX+\na_YX-\na_XY)\xi=0\\
  \end{split}
 \end{equation} 
\end{proof}

\begin{cor}
 If $(M,\vp)$ is a $G_2$-manifold with full $G_2$-holonomy, then $\d\om\neq 0$ (or equivalently, $\na\xi\neq0$). In particular, the standard $G_2$ almost contact metric structure cannot be cosymplectic, almost cosymplectic nor quasi-Sasakian.
\label{G2manifold}
\end{cor}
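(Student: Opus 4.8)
The plan is to argue by contradiction, using Theorem \ref{cosymplectic} together with the holonomy principle. Suppose, for contradiction, that $\d\om=0$. By the construction recalled in the Introduction, the Reeb field $\xi$ of the standard $G_2$ almost contact metric structure has unit length with respect to $g_\vp$ at every point, so it is in particular nowhere vanishing. Theorem \ref{cosymplectic} then forces $\na\xi=0$; that is, $\xi$ is a nonzero parallel vector field on $M$.

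Next I would invoke the holonomy principle: a parallel section of $TM$ is fixed by the holonomy representation at each point, so a nonzero parallel vector field yields a nonzero vector in $\R^7$ fixed by $\Hol(g_\vp)\subseteq G_2$. Since the standard representation of $G_2$ on $\R^7$ is irreducible, it has no nonzero fixed vectors, whence $\Hol(g_\vp)$ is a \emph{proper} subgroup of $G_2$; by Berger's classification (as recalled in the Introduction) it must then be trivial, $SU(2)$, or $SU(3)$. This contradicts the standing hypothesis that $(M,\vp)$ has full $G_2$-holonomy. Therefore $\d\om\neq 0$, and by the equivalence in Theorem \ref{cosymplectic} this is the same as $\na\xi\neq 0$. (Equivalently, one may phrase this using the remark from the Introduction that on a Ricci-flat manifold parallel, Killing, and harmonic vector fields coincide, and that a full-holonomy $G_2$-manifold carries no nonzero such field.)

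For the classification consequences: almost cosymplectic structures are by definition those with $\d\eta=0=\d\om$ (class $\mathcal{C}_2\op\mathcal{C}_9$), cosymplectic structures are the normal ones among these, and quasi-Sasakian structures (class $\mathcal{C}_6\op\mathcal{C}_7$) satisfy $\d\om=0$ together with normality. Each of these three conditions entails $\d\om=0$, which we have just excluded; hence the standard $G_2$ almost contact metric structure on a $G_2$-manifold of full holonomy can be none of them.

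I do not anticipate a genuine obstacle. The only step requiring a little care is the assertion that a full-holonomy $G_2$-manifold admits no nonzero parallel (equivalently, by Ricci-flatness, Killing) vector field; but this is precisely the holonomy/irreducibility observation already recorded in the Introduction, so it may simply be cited rather than re-proved.
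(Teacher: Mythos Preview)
Your argument is correct and follows essentially the same route as the paper: use Theorem \ref{cosymplectic} to pass from $\d\om=0$ to a nonzero parallel vector field, observe that this forces the holonomy to be a proper subgroup of $G_2$, and then note that all three target geometries require $\d\om=0$. Your write-up is simply more explicit about the holonomy principle and the irreducibility of the $G_2$-representation than the paper's terse one-line appeal to the facts recorded in the Introduction.
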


\begin{proof}
 The first statement follows since the existence of a parallel vector field on a $G_2$-manifold implies a reduction in the holonomy group to a proper subgroup of $G_2$. The second statement follows since all three geometries require $\d\om=0$.
\end{proof}

\begin{cor}
 If $(M,\vp)$ is a $G_2$-manifold, then the standard $G_2$ almost contact metric structure cannot be a contact metric structure. In particular, it cannot be Sasakian (or more generally, $a$-Sasakian where $a$ is constant).
\label{sasakian}
\end{cor}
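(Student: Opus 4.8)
The plan is to show that being a contact metric structure would force the fundamental $2$-form $\om$ to vanish, which is impossible. By definition the standard $G_2$ almost contact metric structure is a \emph{contact metric structure} exactly when $\om$ is (a nonzero constant multiple of) $\d\eta$, and the $a$-Sasakian condition with $a$ a nonzero constant is the more general requirement $a\om=\d\eta$; so in every case under consideration we would have $\d\eta=a\om$ for some nonzero constant $a$. Applying the exterior derivative and using $\d(\d\eta)=0$ gives immediately $\d\om=0$.

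Next I would feed this into Theorem~\ref{cosymplectic}, whose hypotheses (a closed $G_2$-manifold) are precisely the standing assumptions of this section: $\d\om=0$ forces $\na\xi=0$, and the same theorem then also gives $\d\eta=0$. Combined with $\d\eta=a\om$ and $a\neq 0$, this yields $\om\equiv 0$. But $\om$ is the fundamental $2$-form of an almost contact metric structure on a $7$-manifold, so $\eta\w\om^3\neq 0$ and in particular $\om\not\equiv 0$ — a contradiction. Hence the standard $G_2$ almost contact metric structure can never be a contact metric structure, nor $a$-Sasakian with $a$ constant; the Sasakian case is then the special value $a=1$ of the latter (equivalently, a Sasakian structure is in particular contact metric, so it is covered).

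Alternatively, one can argue in a self-contained way mirroring the volume computation of the Introduction: from $\d\eta=a\om=-a\,(\xi\lrcorner\vp)$, together with $\d\vp=0$ on a $G_2$-manifold and \eq{g2metric} applied to $X=Y=\xi$, Stokes' theorem gives
\begin{equation*}
6\,\Vol(M)=\int_M(\xi\lrcorner\vp)\w(\xi\lrcorner\vp)\w\vp=\frac{1}{a^2}\int_M\d(\eta\w\d\eta\w\vp)=0,
\end{equation*}
which is absurd since $M$ is closed with positive volume. Either route is short and largely routine; the only points needing care are that the proportionality factor in the definition of a (possibly $a$-)Sasakian or contact metric structure is a nonzero \emph{constant}, so that one may divide by it, and that the compactness hypothesis of Theorem~\ref{cosymplectic} (entering through the Ricci-flat Bochner argument that Killing implies parallel) is genuinely in force. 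I do not anticipate any substantive obstacle beyond this bookkeeping.
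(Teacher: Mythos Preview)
Your first route is exactly the paper's proof: from $\om=\d\eta$ (or $a\om=\d\eta$) deduce $\d\om=0$, invoke Theorem~\ref{cosymplectic} to get $\d\eta=0$, and hence $\om=0$, contradicting $\eta\w\om^3\neq 0$. Your alternative volume computation is likewise essentially the argument already displayed in the Introduction of the paper, so both approaches match the paper's own reasoning.
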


\begin{proof}
 Recall that a contact metric structure requires $\om=\d\eta$ which immediately yields $\d\om=0$; however, by the above theorem, this implies that $\d\eta=0$ which gives $\om=0$, contradicting the fact that $\eta\w(\om)^3\neq 0$.
\end{proof}

Note that Chinea and Gonzalez' result means that we can decompose $\na\om$ as follows:
\begin{equation}
 \begin{split}
  (\na_X\om)(Y,Z)&=\al_1(X,Y,Z)+\al_2(X,Y,Z)+\be_{12}(X,Y,Z)\\
  &=(\be_1(X,Y,Z)+\be_2(X,Y,Z)+\be_3(X,Y,Z)+\be_4(X,Y,Z))+(\be_5(X,Y,Z)+\be_6(X,Y,Z)\\
  &+\be_7(X,Y,Z)+\be_8(X,Y,Z)+\be_9(X,Y,Z)+\be_{10}(X,Y,Z)+\be_{11}(X,Y,Z))+\be_{12}(X,Y,Z)\\
 \end{split}
\end{equation}
where $\al_1\in\mathcal{D}_1$, $\al_2\in\mathcal{D}_2$, $\be_i\in\mathcal{C}_i$ for all $i=1,2,\ldots,12$ with $\al_1=\be_1+\be_2+\be_3+\be_4$ and $\al_2=\sum_{i=5}^{11}\be_{i}$. Since we have classified the almost contact metric structures where $\xi$ is parallel, we now consider the case that $\na\xi\neq 0$. We begin with the following observation.

\begin{lem}
 $(\na_X\om)(\xi,Y)=0$ if and only if $\na\xi=0$.
\end{lem}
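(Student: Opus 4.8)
The plan is to use the formula $(\na_X\om)(Y,Z) = g(Y, \na_X\xi \t Z)$ derived just before Theorem~\ref{cosymplectic}, specialized to the case where the second slot is $\xi$. First I would substitute $Y \mapsto \xi$ into that identity to obtain
\[
(\na_X\om)(\xi, Z) = g(\xi, \na_X\xi \t Z) = \vp(\xi, \na_X\xi, Z),
\]
using the defining relation \eq{g21} between $\vp$, $g$ and the cross product. The reverse direction is immediate: if $\na\xi = 0$ then certainly $\na_X\xi \t Z = 0$ for all $X, Z$, so $(\na_X\om)(\xi, Z) = 0$.

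For the forward direction, suppose $(\na_X\om)(\xi, Z) = 0$ for all vector fields $X, Z$; then $\vp(\xi, \na_X\xi, Z) = 0$ for all $X, Z$, equivalently $(\xi \lrcorner (\na_X\xi \lrcorner \vp))(Z) = 0$ for all $Z$, so $\xi \lrcorner (\na_X\xi \lrcorner \vp) = 0$ as a $1$-form, for every $X$. I then want to conclude $\na_X\xi = 0$. The key algebraic fact is that for the $G_2$ $3$-form, the contraction map $V \to \La^2 V^*$, $v \mapsto v \lrcorner \vp$, is injective, and moreover for two vectors $u, v$ one has $u \lrcorner (v \lrcorner \vp) = \vp(v, u, \cdot)$; this vanishes as a $1$-form iff $\vp(v, u, w) = 0$ for all $w$. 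Now recall from the displayed computation before Theorem~\ref{cosymplectic} that $\na_X\xi \perp \xi$ for every $X$. So I need: if $v \perp \xi$ and $\vp(\xi, v, w) = 0$ for all $w$, then $v = 0$. But this is precisely the statement that $\phi v = \xi \t v$ is nonzero whenever $v$ is a nonzero vector orthogonal to $\xi$ — indeed $g(\phi v, w) = g(\xi \t v, w) = \vp(\xi, v, w)$, so $\vp(\xi, v, \cdot) \equiv 0$ forces $\phi v = 0$, and since $\phi$ has rank $2n = 6$ with kernel exactly $\R\xi$ (a consequence of \eq{acs1}, \eq{acs2} noted in the Introduction), $v \perp \xi$ and $\phi v = 0$ together give $v = 0$. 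Applying this with $v = \na_X\xi$ for each $X$ yields $\na_X\xi = 0$ for all $X$, i.e. $\na\xi = 0$.

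The main obstacle, such as it is, is organizing the rank/kernel argument for $\phi$ cleanly rather than re-deriving it: the cleanest route is to observe directly that $(\na_X\om)(\xi, Z) = g(\na_X\xi \t Z, \xi) = -g(\na_X\xi, \xi \t Z) = -g(\na_X\xi, \phi Z)$ (using antisymmetry of the cross product and \eq{g21}), so the hypothesis says $\na_X\xi$ is $g$-orthogonal to $\im\phi$ for every $X$; since $\im\phi$ is the orthogonal complement of $\R\xi$ and $\na_X\xi$ is already orthogonal to $\xi$, we get $\na_X\xi \in (\R\xi)^\perp \cap (\im\phi)^\perp = \{0\}$. This is essentially one line once the setup is in place, so I would present it in that order: state the two reformulations of $(\na_X\om)(\xi, Z)$, note $\na_X\xi \perp \xi$, then combine. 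I expect the whole proof to be three or four short displayed equations.
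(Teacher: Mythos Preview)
Your proposal is correct and follows essentially the same approach as the paper: both start from $(\na_X\om)(\xi,Y)=g(\xi,\na_X\xi\t Y)=g(\xi\t\na_X\xi,Y)$, reduce the forward direction to $\xi\t\na_X\xi=0$, and then use that $\phi=\xi\t\cdot$ is injective on $\xi^\perp$ together with $\na_X\xi\perp\xi$. The only cosmetic difference is that the paper finishes via the double cross product identity \eq{cp1} (i.e.\ $\phi^2=-I+\eta\otimes\xi$ applied to $\na_X\xi$), whereas you phrase the same fact as $\ker\phi=\R\xi$ or $\im\phi=\xi^\perp$.
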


\begin{proof}
 We note that 
 \begin{equation}
  (\na_X\om)(\xi,Y)=g(\xi,\na_X\xi\t Y)=g(\xi\t\na_X\xi,Y)
 \end{equation}
Thus, $(\na_X\om)(\xi,Y)=0$ for all vector fields $X,Y$ if and only if $\xi\t\na_X\xi=0$ for all vector fields $X$. Of course, if $\na_X\xi=0$ for all vector fields $X$, then $\xi\t\na_X\xi=0$ for all vector fields $X$. Conversely, if $\xi\t\na_X\xi=0$ for all vector fields $X$, then
\begin{equation}
 0=\xi\t(\xi\t\na_X\xi)=-\na_X\xi+g(\xi,\na_X\xi)\xi=-\na_X\xi
\end{equation}
where the first equality follows from \eq{cp1} and the second follows from the fact that $\na_X\xi$ is perpendicular to $\xi$ for all $X$, and hence the result follows.
\end{proof}

Since $\al_1$ requires in particular that $\al_1(X,\xi,Y)=0$ for all $X,Y$, this immediately yields the following.

\begin{cor}
 If $\na\xi\neq 0$, then $\al_1=0$.
\end{cor}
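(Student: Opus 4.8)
The plan is to obtain the corollary from the decomposition $\na\om=\al_1+\al_2+\be_{12}$ recorded above, using the Lemma to eliminate the $\mathcal{D}_1$-component $\al_1$. Since $\mathcal{D}_1=\mathcal{C}_1\op\mathcal{C}_2\op\mathcal{C}_3\op\mathcal{C}_4$ and $\al_1=\be_1+\be_2+\be_3+\be_4$, it suffices to show $\al_1=0$; the idea is that the defining relation $\al_1(X,\xi,Y)=0$ of $\mathcal{D}_1$ from \eq{d1}, combined with the fact that for the tensor at hand the contractions $(\na_X\om)(\xi,Y)$ are controlled precisely by $\na\xi$ via the Lemma, should rule out any nontrivial $\mathcal{D}_1$-part as soon as $\na\xi\neq0$.

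To turn this into a proof I would first isolate $\al_1$ concretely. By the two relations in \eq{d1} and the skew-symmetry of $\na_X\om$ in its last two arguments, any $\al_1\in\mathcal{D}_1$ vanishes on every triple one of whose entries is a multiple of $\xi$, so $\al_1$ is determined by its values on vectors lying in $\ker\eta$; and on such triples both $\al_2$ and $\be_{12}$ vanish, by \eq{d2} and \eq{c12} respectively. Hence $\al_1(X,Y,Z)=(\na_X\om)(Y,Z)=g(Y,\na_X\xi\t Z)$ whenever $X,Y,Z\perp\xi$, and $\al_1=0$ becomes equivalent to the assertion that $\na_X\xi\t Z$ is proportional to $\xi$ for all $X,Z\perp\xi$. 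I would then try to deduce this from $\na\xi\neq0$ by the cross-product bookkeeping already used in the Lemma: contract with an arbitrary $W\perp\xi$, use that $\phi=\xi\t(\cdot)$ restricts to an almost complex structure on $\ker\eta$ by \eq{cp1}, and play this off against the contractions $(\na_X\om)(\xi,\cdot)$ computed there.

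I expect this last deduction to be the crux. The quick route — observing that $\na\om$ itself fails the relation $\al(X,\xi,Y)=0$ and concluding on that ground that $\al_1=0$ — is tempting but would not be rigorous, since failing to lie in $\mathcal{D}_1$ is strictly weaker than having vanishing $\mathcal{D}_1$-component. So one genuinely has to show that the horizontal $3$-tensor $(X,Y,Z)\mapsto g(Y,\na_X\xi\t Z)$ on $\ker\eta$ vanishes identically, and for this the Lemma alone does not suffice; the structure of the $G_2$ cross product on $\ker\eta$ must be used directly, and this is where I anticipate the real work — and perhaps an additional hypothesis controlling the part of $\na\xi$ transverse to $\xi$ — to enter.
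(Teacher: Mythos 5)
Your reduction is correct and is in fact more careful than the paper's own argument. You are right that the $\mathcal{D}_1$-component $\al_1$ is exactly the restriction of $\na\om$ to triples in $\ker\eta$ (extended by zero): $\al_2$ and $\be_{12}$ vanish on such triples by \eq{d2} and \eq{c12}, while $\al_1$ vanishes whenever an entry is a multiple of $\xi$. So, as you say, $\al_1=0$ is equivalent to $g(Y,\na_X\xi\t Z)=0$, i.e.\ to $\na_X\xi\t Z\in\R\xi$, for all $X,Y,Z\perp\xi$. You are also right to distrust the ``quick route.'' That quick route is, however, precisely the paper's entire proof: the text infers the corollary from the sentence ``Since $\al_1$ requires in particular that $\al_1(X,\xi,Y)=0$ for all $X,Y$, this immediately yields the following,'' i.e.\ from the Lemma's conclusion that $(\na_X\om)(\xi,Y)$ is not identically zero when $\na\xi\neq0$. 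As you observe, what actually follows from that is only $\na\om\notin\mathcal{D}_1$, equivalently $\al_2+\be_{12}\neq0$; it says nothing about the vanishing of the $\mathcal{D}_1$-component.

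The extra hypothesis you anticipate needing is genuinely needed, so the final deduction you were hoping for cannot be carried out. If $W=\na_X\xi\neq0$ for some $X\perp\xi$, then $W\perp\xi$ and, by \eq{cp1}, $|W\t Z|^2=|W|^2|Z|^2-g(W,Z)^2$, so $Z\mapsto W\t Z$ is injective on $W^\perp$; it therefore maps the five-dimensional space $W^\perp\cap\xi^\perp$ injectively into $T_pM$, and its image cannot lie in the one-dimensional span of $\xi$. Hence there exist $Y,Z\perp\xi$ with $g(Y,W\t Z)\neq0$, i.e.\ $\al_1\neq0$. Consequently $\al_1=0$ holds if and only if $\na_X\xi=0$ for every $X\perp\xi$, a condition that does not follow from $\na\xi\neq0$ (on the flat $7$-torus with the standard parallel $G_2$-structure, $\xi=\cos(f)e_6+\sin(f)e_7$ with $f$ a nonconstant function of $x_1$ gives $\na_{e_1}\xi\neq0$ with $e_1\perp\xi$, so $\al_1\neq0$ there). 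The gap you flagged is therefore not a defect of your write-up but of the statement as justified in the paper, and the subsequent classification theorem, which relies on $\be_1=\cdots=\be_4=0$, inherits the same issue unless the hypothesis is strengthened to $\na_X\xi=0$ for all $X\perp\xi$.
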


Now, that $\na\xi\neq 0$ implies that $\xi$ is not a harmonic vector field, so in particular, we must have either $\d\eta\neq0$ or $\de\eta\neq 0$ or both. Note that since $||\de\eta||^2$ at each point is given by $i_4(\na\om)+i_{14}(\na\om)$, $\de\eta=0$ for all $p\in M$, for all classes $\mathcal{C}_j$ except when $j=4,5$; since we already have that $\be_4=0$ because $\al_1=0$, we see that, if $\na\xi\neq 0$, then $\de\eta\neq0$ if and only if $\be_5\neq 0$. 

To continue with the classification, we note that under the condition that $\na\xi\neq 0$, there are two natural cases: either $\na_{\xi}\xi=0$ (in which case there exists some vector field $X\neq\xi$ with $\na_{X}\xi\neq 0$) or $\na_{\xi}\xi\neq 0$. The following lemmas will prove useful.

\begin{lem}
 $i_6(\na\om)=0$ if and only if $\na_{e_j}\xi=0$ for all $j=1,\ldots,6$.
\end{lem}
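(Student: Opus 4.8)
The plan is to evaluate $i_6(\na\om)$ explicitly in terms of $\na\xi$ and recognize it as a sum of squared norms. The key input is the identity $(\na_X\om)(\xi,Y)=g(\xi\t\na_X\xi,Y)$, which was established in the proof of the preceding lemma (and follows from the general formula $(\na_X\om)(Y,Z)=g(Y,\na_X\xi\t Z)$).

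First I would substitute $\al=\na\om$ into the definition $i_6(\al)=\sum_{i,k}\al(e_i,\xi,e_k)^2$, where the middle slot is pinned to $\xi$ and $i,k$ range over $1,\dots,6$, and use $\al(e_i,\xi,e_k)=(\na_{e_i}\om)(\xi,e_k)=g(\xi\t\na_{e_i}\xi,e_k)$. The next step is to note that $\xi\t\na_{e_i}\xi$ is orthogonal to $\xi$: by \eq{g21}, $g(\xi\t\na_{e_i}\xi,\xi)=\vp(\xi,\na_{e_i}\xi,\xi)=0$ since $\vp$ is alternating. Hence $\xi\t\na_{e_i}\xi$ is a linear combination of $e_1,\dots,e_6$, so summing over $k$ recovers the full norm, $\sum_{k=1}^6 g(\xi\t\na_{e_i}\xi,e_k)^2=|\xi\t\na_{e_i}\xi|^2$.

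Then I would apply the compatibility identity \eq{acms1} in the form $g(\xi\t X,\xi\t Y)=g(X,Y)-g(\xi,X)g(\xi,Y)$ with $X=Y=\na_{e_i}\xi$, together with the already-noted fact that $\na_{e_i}\xi$ is perpendicular to $\xi$, to obtain $|\xi\t\na_{e_i}\xi|^2=|\na_{e_i}\xi|^2$. Combining the two displays gives $i_6(\na\om)=\sum_{i=1}^6|\na_{e_i}\xi|^2$, and since this is a finite sum of nonnegative terms it vanishes if and only if each $\na_{e_j}\xi=0$ for $j=1,\dots,6$.

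The argument is short and I do not anticipate a genuine obstacle; the only points needing care are the index bookkeeping in the definition of $i_6$ (so that the conclusion is about $\na_{e_j}\xi$ for $j\le 6$ and not about $\na\xi$ as a whole, the $\na_\xi\xi$ term not entering $i_6$), and checking that the cross‑product norm identity applies precisely because $\na_{e_i}\xi$ has no component along $\xi$. Since $i_6$ is a $U(n)\t 1$‑invariant, the resulting expression is independent of the adapted orthonormal frame chosen.
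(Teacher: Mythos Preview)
Your proposal is correct and follows essentially the same approach as the paper: both compute $(\na_{e_j}\om)(\xi,e_k)=g(\xi\t\na_{e_j}\xi,e_k)$, use that $\xi\t\na_{e_j}\xi\perp\xi$, and then appeal to the fact that $\phi=\xi\t(\cdot)$ is injective on $\xi^{\perp}$ (you via the norm identity \eq{acms1}, the paper via the double cross product \eq{cp1}). Your version is slightly cleaner in that it produces the explicit formula $i_6(\na\om)=\sum_{j=1}^6|\na_{e_j}\xi|^2$, but the two arguments are otherwise the same.
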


\begin{proof}
 Note that, from the definitions of the quadratic invariants given in Section \ref{CACMS}
 \begin{equation}
  0=i_6(\na\om)=\sum_{j,k=1}^6\left((\na_{e_j}\om)(\xi,e_k)\right)^2
 \end{equation}
if and only if for all $j,k=1,\ldots,6$
\begin{equation}
 0=(\na_{e_j}\om)(\xi,e_k)=g(\xi,\na_{e_j}\xi\t e_k)=g(\na_{e_j}\xi\t e_k,\xi)=\vp(\na_{e_j}\xi,e_k,\xi)=\vp(\xi,\na_{e_j}\xi,e_k)=g(\xi\t\na_{e_j}\xi,e_k)
\end{equation}
Recalling that $\xi\t\na_{e_j}\xi$ is also perpendicular to $\xi$, we see, by nondegeneracy of the metric, that $g(\xi\t\na_{e_j}\xi,e_k)=0$ for all $j,k=1,\ldots,6$ if and only if for all $j=1,\ldots,6$
\begin{equation}
 \xi\t\na_{e_j}\xi=0
\end{equation}
Since $\xi\t(\xi\t\na_{e_j}\xi)=-\na_{e_j}\xi+g(\xi,\na_{e_j}\xi)\xi=-\na_{e_j}\xi$, we thus find that $\xi\t\na_{e_j}\xi=0$ for all $j=1,\ldots,6$ if and only if $\na_{e_j}\xi=0$ for all $j=1,\ldots, 6$ as we wanted to show.
\end{proof}

Using similar techniques, we can show

\begin{lem}
 $i_{16}(\na\om)=\sum_{k}\left((\na_{\xi}\om)(\xi,e_k)\right)^2=0$ if and only if $\na_{\xi}\xi=0$.
\end{lem}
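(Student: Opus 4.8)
The plan is to follow the template of the preceding lemma, with the horizontal frame vector $e_j$ replaced by $\xi$ itself. First I would rewrite the summands of $i_{16}(\na\om)$ using the identity $(\na_X\om)(Y,Z)=g(Y,\na_X\xi\t Z)$ recorded at the start of this section: taking $X=Y=\xi$ and $Z=e_k$ gives
\begin{equation}
 (\na_{\xi}\om)(\xi,e_k)=g(\xi,\na_{\xi}\xi\t e_k)=g(\na_{\xi}\xi\t e_k,\xi)=\vp(\na_{\xi}\xi,e_k,\xi)=\vp(\xi,\na_{\xi}\xi,e_k)=g(\xi\t\na_{\xi}\xi,e_k).
\end{equation}
Since $i_{16}(\na\om)=\sum_k\big((\na_{\xi}\om)(\xi,e_k)\big)^2$ is a sum of squares, it vanishes if and only if $g(\xi\t\na_{\xi}\xi,e_k)=0$ for every $k=1,\ldots,6$.

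Next I would use two orthogonality facts already invoked in the $i_6(\na\om)$ lemma: $g(\xi,\na_{\xi}\xi)=0$ (because $g(\xi,\xi)\equiv1$), and $\xi\t v\perp\xi$ for any $v$ (since $\vp(\xi,v,\xi)=0$). The first, together with the vanishing of the components $g(\xi\t\na_{\xi}\xi,e_k)$ and the fact that $\{e_1,\ldots,e_6,\xi\}$ is an orthonormal basis, shows by nondegeneracy of $g$ that $i_{16}(\na\om)=0$ is equivalent to $\xi\t\na_{\xi}\xi=0$. Then applying $\xi\t(\cdot)$ once more and using \eq{cp1} with $g(\xi,\xi)=1$ and $g(\xi,\na_{\xi}\xi)=0$,
\begin{equation}
 0=\xi\t(\xi\t\na_{\xi}\xi)=-\na_{\xi}\xi+g(\xi,\na_{\xi}\xi)\xi=-\na_{\xi}\xi,
\end{equation}
so $\xi\t\na_{\xi}\xi=0$ is equivalent to $\na_{\xi}\xi=0$. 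The converse direction ($\na_{\xi}\xi=0\Rightarrow i_{16}(\na\om)=0$) is immediate from the displayed formula for $(\na_{\xi}\om)(\xi,e_k)$.

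There is no real obstacle here — the argument is a line-by-line transcription of the $i_6(\na\om)$ lemma. The only points requiring a little care are making sure the orthogonality statements $\na_{\xi}\xi\perp\xi$ and $\xi\t\na_{\xi}\xi\perp\xi$ are both in place, so that the passage from ``all $e_k$-components vanish'' to ``the vector vanishes'' and the collapse of the double cross product are both valid; beyond that it is routine bookkeeping.
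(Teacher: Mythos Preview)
Your proposal is correct and is exactly what the paper has in mind: the paper simply writes ``Using similar techniques, we can show'' this lemma, i.e.\ a verbatim rerun of the $i_6(\na\om)$ argument with $e_j$ replaced by $\xi$, which is precisely what you carry out. One tiny slip: where you write ``The first, together with\ldots'', it is actually your \emph{second} orthogonality fact ($\xi\t v\perp\xi$) that lets you pass from vanishing $e_k$-components to $\xi\t\na_{\xi}\xi=0$; the first fact is used only in the double cross product collapse, as you correctly display.
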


From this, we see that $\na_{\xi}\xi\neq 0$ if and only if $i_{16}\neq 0$; however, the only class that allows $i_{16}\neq 0$ is $\mathcal{C}_{12}$ and hence we conclude that $\be_{12}\neq 0$ if and only if $\na_{\xi}\xi\neq 0$. Further, we note that since class $\mathcal{C}_{11}$ requires both $i_5(\be_{11})\neq 0$ and $i_{16}(\be_{11})=0$; however, by the above, $i_{16}(\be_{11})=0$ implies $\na_{\xi}\xi=0$, so 
\begin{equation}
 i_5(\be_{11})=\sum_{j,k}g(e_j,\na_{\xi}\xi\t e_k)=0
\end{equation}
Hence, we must have $\be_{11}=0$ always. Summarizing these results, we have the following theorem.

\begin{thm}
 Let $M$ be a $G_2$-manifold with the standard $G_2$ almost contact metric structure. If $\na\xi\neq 0$, then the almost contact metric structure is of class $\mathcal{G}$ where 
 \begin{enumerate}
  \item In general, we have
 \begin{equation}
  \mathcal{G}\subseteq \mathcal{C}_5\op\mathcal{C}_6\op\mathcal{C}_7\op\mathcal{C}_8\op\mathcal{C}_9\op\mathcal{C}_{10}\op\mathcal{C}_{12}
 \end{equation}
  \item In the case that $\de\eta=0$, then
 \begin{equation}
  \mathcal{G}\subseteq \mathcal{C}_6\op\mathcal{C}_7\op\mathcal{C}_8\op\mathcal{C}_9\op\mathcal{C}_{10}\op\mathcal{C}_{12}
 \end{equation}
  \item In the case that $\na_{\xi}\xi=0$, then
 \begin{equation}
  \mathcal{G}\subseteq \mathcal{C}_5\op\mathcal{C}_6\op\mathcal{C}_7\op\mathcal{C}_8\op\mathcal{C}_9\op\mathcal{C}_{10}
 \end{equation}
  \item In the case that both $\de\eta=0$ and $\na_{\xi}\xi=0$,
 \begin{equation}
  \mathcal{G}\subseteq \mathcal{C}_6\op\mathcal{C}_7\op\mathcal{C}_8\op\mathcal{C}_9\op\mathcal{C}_{10}
 \end{equation}
  \item In the case that the almost contact structure is normal, then
 \begin{equation} 
  \mathcal{G} \subseteq \mathcal{C}_5\op\mathcal{C}_6\op\mathcal{C}_7\op\mathcal{C}_8
 \end{equation}
  \item In the case that the almost contact structure is normal and $\de\eta=0$, then
 \begin{equation}
  \mathcal{G} \subseteq \mathcal{C}_6\op\mathcal{C}_7\op\mathcal{C}_8
 \end{equation}
 \end{enumerate}
\end{thm}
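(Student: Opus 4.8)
The plan is to obtain the six inclusions by intersecting, one hypothesis at a time, the pointwise constraints already assembled in this section (the statement is essentially a consolidation of what precedes it). Work at a point $p\in M$ with $\na\xi\neq0$ and write $(\na\om)_p=\sum_{i=1}^{12}\be_i$, $\be_i\in\mathcal{C}_i(T_pM)$, so that $\al_1=\be_1+\be_2+\be_3+\be_4\in\mathcal{D}_1$, $\al_2=\be_5+\cdots+\be_{11}\in\mathcal{D}_2$ and $\be_{12}\in\mathcal{C}_{12}$.

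The first step is to record the consequences of $\na\xi\neq0$ alone: by the corollary that $\na\xi\neq0$ implies $\al_1=0$ we get $\be_1=\be_2=\be_3=\be_4=0$, and in addition $\be_{11}=0$ identically (the incompatibility, noted above, between the $\mathcal{C}_{11}$ relations $i_5(\al)=\|\al\|^2$ and $i_{16}(\al)=0$ and the lemma $i_{16}(\na\om)=0\Leftrightarrow\na_\xi\xi=0$). Hence $(\na\om)_p$ already lies in $\mathcal{C}_5\op\mathcal{C}_6\op\mathcal{C}_7\op\mathcal{C}_8\op\mathcal{C}_9\op\mathcal{C}_{10}\op\mathcal{C}_{12}$, which is (1).

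Next I would impose the further hypotheses one by one, using equivalences already in hand: given $\be_4=0$, the identity $\|\de\eta\|^2=i_4(\na\om)+i_{14}(\na\om)$ gives $\de\eta=0\Leftrightarrow i_{14}(\na\om)=0\Leftrightarrow\be_5=0$; and the lemma that $i_{16}(\na\om)=0\Leftrightarrow\na_\xi\xi=0$, combined with $\mathcal{C}_{12}$ being the only class with $i_{16}\neq0$ (so $i_{16}(\na\om)=\|\be_{12}\|^2$), gives $\na_\xi\xi=0\Leftrightarrow\be_{12}=0$. So imposing $\de\eta=0$ on (1) deletes $\mathcal{C}_5$ and produces (2), imposing $\na_\xi\xi=0$ deletes $\mathcal{C}_{12}$ and produces (3), and imposing both produces (4). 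Finally, normality means $(\na\om)_p\in\mathcal{C}_3\op\mathcal{C}_4\op\mathcal{C}_5\op\mathcal{C}_6\op\mathcal{C}_7\op\mathcal{C}_8$ by \cite{ChGo}; intersecting with (1), which has already removed $\mathcal{C}_3,\mathcal{C}_4,\mathcal{C}_9,\mathcal{C}_{10}$ and $\mathcal{C}_{12}$, leaves $\mathcal{C}_5\op\mathcal{C}_6\op\mathcal{C}_7\op\mathcal{C}_8$, which is (5), and adjoining $\be_5=0$ gives (6).

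Since this is really bookkeeping, I do not expect an analytic obstacle; the only place needing care is the quadratic-invariant arithmetic. One should check that $i_4,i_5,i_{14},i_{16}$ are nonnegative quadratic forms (sums of squares) in $\al$, so that the vanishing of such an invariant at $p$ genuinely forces the vanishing of a component rather than merely a linear relation among components, and that, within the already-reduced ambient space, each of these invariants is carried by a single $\mathcal{C}_i$ (as the relation table records), which is what legitimizes the deletions. It is also worth stating explicitly that ``class $\mathcal{G}$'' here is only a pointwise upper bound: one does not claim a single $\mathcal{C}_i$-support over all of $M$, only that $(\na\om)_p$ lies in the displayed direct sum at every point where the relevant hypotheses hold.
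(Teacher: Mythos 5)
Your proposal follows essentially the same route as the paper, which itself presents this theorem as a summary of the immediately preceding results: $\al_1=0$ from the corollary when $\na\xi\neq 0$, $\be_{11}=0$ from the incompatibility of the $\mathcal{C}_{11}$ relations with the $i_{16}$ lemma, $\de\eta=0\Leftrightarrow\be_5=0$ via $\|\de\eta\|^2=i_4+i_{14}$, $\na_\xi\xi=0\Leftrightarrow\be_{12}=0$ via $i_{16}$, and intersection with the normal class $\mathcal{C}_3\op\cdots\op\mathcal{C}_8$. Your explicit caveats (that the relevant invariants are sums of squares and that the conclusion is a pointwise upper bound) are sensible and consistent with, indeed slightly more careful than, the paper's own exposition.
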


\section{Almost Contact $3$-Structures}
\label{AC3S}

We now define almost contact $3$-structures as defined originally by Kuo \cite{Kuo}. Let $M$ be a smooth manifold of dimension $4m+3$ \cite[Thm. $4$]{Kuo}. Then an \emph{almost contact $3$-structure} on $M$ is a reduction of the structure group of the tangent bundle of $M$ to $Sp(m)\t I_3$ where $I_3$ is the $3\t 3$ identity matrix \cite[Thm. $5$]{Kuo}. Such a reduction of the structure group of the tangent bundle of $M$ is equivalent to the existence of $3$ almost contact structures $(\phi_i,\xi_i,\eta_i)$, $i=1,2,3$ satisfying
\begin{equation}
 \eta_i(\xi_j)=\eta_j(\xi_i)=0
\label{ac3s1}
\end{equation}
\begin{equation}
 \phi_i\xi_j=-\phi_j\xi_i=\xi_k
\label{ac3s2}
\end{equation}
\begin{equation}
 \eta_i\circ\phi_j=-\eta_j\circ\phi_i=\eta_k
\label{ac3s3}
\end{equation}
\begin{equation}
 \phi_i\phi_j-\eta_j\ot\xi_i=-\phi_j\phi_i+\eta_i\ot\xi_j=\phi_k
\label{ac3s4}
\end{equation}
for any cyclic permutation $(i,j,k)$ of $(1,2,3)$. Of fundamental importance is that if $M^{4m+3}$ admits $2$ almost contact structures $(\phi_i,\xi_i,\eta_i)$, $i=1,2$ satisfying
\begin{equation}
 \eta_1(\xi_2)=\eta_2(\xi_1)=0
\label{ac3s5}
\end{equation}
\begin{equation}
 \phi_1\xi_2=-\phi_2\xi_1
\label{ac3s6}
\end{equation}
\begin{equation}
 \eta_1\circ\phi_2=-\eta_2\circ\phi_1
\label{ac3s7}
\end{equation}
\begin{equation}
 \phi_1\phi_2-\eta_2\ot\xi_1=-\phi_2\phi_1+\eta_1\ot\xi_2
\label{ac3s8}
\end{equation}
then in fact $M$ admits an almost contact $3$-structure by setting $\phi_3=\phi_1\phi_2-\eta_2\ot\xi_1$, $\xi_3=\phi_1\xi_2$ and $\eta_3=\eta_1\circ\phi_2$; see \cite[Thm. $1$]{Kuo}. If there exists a single Riemannian metric $g$ that is compatible with all $3$ of the almost contact structures, then $(\phi_i,\xi_i,\eta_i,g)$, $i=1,2,3$ is called an \emph{almost contact metric $3$-structure}. Again, of fundamental importance is that if $M^{4m+3}$ admits $2$ almost contact metric structures $(\phi_i,\xi_i,\eta_i,g)$, $i=1,2$, then \eq{ac3s5}, \eq{ac3s6} and \eq{ac3s7} follow from \eq{ac3s8}; see \cite[Thm. $2$]{Kuo}. Almost contact $3$-structures were introduced in order to give a structure of contact-type that is similar to an almost quaternionic structure in the same way that an almost contact structure is similar to an almost complex structure. Almost contact $3$-structures have been the subject of many articles and have been shown to have importance in physics as well.

As with almost contact metric structures, various types of almost contact metric $3$-structures have been of particular interest to several authors, see e. g., \cite{CaNi}, \cite{CNY2}, \cite{IOV}. In particular, a \emph{$3$-cosymplectic structure} is an almost contact metric $3$-structure $(\phi_i,\xi_i,\eta_i,g)$, $i=1,2,3$, where each of the almost contact metric structures is cosymplectic.

\section{An Almost Contact Metric $3$-Structure on Closed Manifolds with a $G_2$-Structure}

Throughout this section, let $M$ denote a closed, orientable $7$-manifold equipped with a (not necessarily integrable) $G_2$-structure $\vp$. Let $g=g_{\vp}$ denote the $G_2$-metric of the $G_2$-structure $\vp$. Results of Thomas show that on $M$, there exist two, linearly independent, non-vanishing vector fields, call them $u$ and $v$. Assume that $u$ and $v$ have been normalized to have length $1$ using the $G_2$-metric $g$. Using the cross product of the $G_2$-structure, we in fact get a third non-vanishing vector field $u\t v$; that $\{u,v,u\t v\}$ is linearly independent at each point follows from the fact that $u\t v$ is orthogonal to both $u$ and $v$ at each point. Since there is no reason to assume that $u$ and $v$ are themselves orthogonal at each point, we will use $u$ and $u\t v$ to construct the almost contact metric $3$-structure. To do this, we first define $(\phi_i,\xi_i,\eta_i)$, $i=1,2$ as follows:
\begin{equation}
 \xi_1=u\text{, }\text{ }\text{ }\text{ }\xi_2=\frac{u\t v}{||u\t v||}
\end{equation}
where $||\cdot||$ is the norm of the $G_2$-metric;
\begin{equation}
 \phi_i(X)=\xi_i\t X
\end{equation}
and
\begin{equation}
 \eta_i(X)=g(\xi_i,X)
\end{equation}
for any vector field $X$ on $M$ and $i=1,2$. Then as in Arikan, Cho and Salur's construction, $(\phi_i,\xi_i,\eta_i)$, $i=1,2$ defines two almost contact structures, both of which are compatible with the $G_2$-metric $g$. By the results of Kuo mentioned in Section \ref{AC3S}, it is sufficient to show that $\phi_1\phi_2-\eta_2\ot\xi_1=-\phi_2\phi_1+\eta_1\ot\xi_2$ is satisfied to prove that $M$ admits an almost contact metric $3$-structure. Let $f=1/||u\t v||$. Then, since the cross product satisfies the relation $u\t((u\t v)\t X)=-(u\t v)\t(u\t X)+u\t(u\t(v\t X))+(u\t(u\t X))\t v$ for any vector field $X$, we have
 \begin{equation}
  \begin{split}
   \phi_1&\phi_2(X)-\eta_2(X)\xi_1=f\phi_1((u\t v)\t X)-fg(u\t v,X)u\\
   =&f(u\t((u\t v)\t X))-fg(u\t v,X)u\\
   =&f\{-(u\t v)\t(u\t X)+(u\t(u\t(v\t X)))+(u\t(u\t X))\t v\}-fg(u\t v,X)u\\
   =&f\{-(u\t v)\t(u\t X)-v\t X+g(u,v\t X)u+(-X+g(u,X)u)\t v\}-fg(u\t v,X)u\\
   =&-f\left((u\t v)\t(u\t X)\right)-fv\t X+fg(X,u\t v)u-fX\t v+fg(u,X)(u\t v)-fg(u\t v,X)u\\
   =&-\phi_2\phi_1(X)+\eta_1(X)\xi_2\\
  \end{split}
 \end{equation}
Thus, we have shown

\begin{thm}
 Let $(M,\vp)$ be a closed $7$-manifold with (not necessarily integrable) $G_2$-structure $\vp$. Then $M$ admits an almost contact metric $3$-structure which is compatible with the $G_2$-metric.
\end{thm}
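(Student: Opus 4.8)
The plan is to use Kuo's criterion (Theorem 1 and Theorem 2 of \cite{Kuo}, as recalled in Section \ref{AC3S}): to produce an almost contact metric $3$-structure it suffices to exhibit two almost contact structures compatible with a single metric $g$ which satisfy the single compatibility relation $\phi_1\phi_2-\eta_2\ot\xi_1=-\phi_2\phi_1+\eta_1\ot\xi_2$, since the remaining relations \eq{ac3s5}--\eq{ac3s7} then follow automatically. So the work splits into three pieces: (i) constructing two non-vanishing, everywhere-orthonormal vector fields $\xi_1,\xi_2$ on $M$; (ii) checking that each $(\phi_i,\xi_i,\eta_i)$ defined via the cross product as in \eq{g2acms1} is an almost contact structure compatible with $g=g_\vp$; and (iii) verifying the one cross-product identity.

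First I would invoke Thomas's theorem: a closed, orientable $7$-manifold with a $G_2$-structure is parallelizable (it is spin with vanishing obstruction classes), so in particular it admits two linearly independent nowhere-zero vector fields $u,v$; normalize $u$ to unit length and replace $v$ by the genuinely independent unit field $\xi_2 := (u\t v)/\|u\t v\|$, which is everywhere orthogonal to $u$ because $u\t v \perp u$. Set $\xi_1 := u$. This gives an orthonormal pair. Step (ii) is then exactly the computation already carried out in the Introduction for the standard $G_2$ almost contact structure, applied verbatim to each $\xi_i$: identity \eq{cp1} gives $\phi_i^2 = -I + \eta_i\ot\xi_i$, the normalization gives $\eta_i(\xi_i)=1$, and the displayed chain of equalities using $\vp(X,Y,Z)=g(X\t Y,Z)$ gives compatibility with $g$.

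The substance is step (iii). Writing $f = 1/\|u\t v\|$, I would expand $\phi_1\phi_2(X) - \eta_2(X)\xi_1 = f\,u\t((u\t v)\t X) - f\,g(u\t v,X)\,u$ using the "second-order" cross-product identity on a $G_2$-manifold,
\[
u\t\bigl((u\t v)\t X\bigr) = -(u\t v)\t(u\t X) + u\t\bigl(u\t(v\t X)\bigr) + \bigl(u\t(u\t X)\bigr)\t v,
\]
then reduce the two triple products $u\t(u\t(v\t X))$ and $(u\t(u\t X))\t v$ via \eq{cp1}, collecting the $g(u,\cdot)u$ and $g(u\t v,\cdot)u$ terms. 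The claim is that everything reorganizes into $-\bigl((u\t v)\t(u\t X)\bigr) - v\t X + g(u,X)(u\t v) - g(u\t v,X)u$ after multiplying by $f$, and that this equals $-\phi_2\phi_1(X) + \eta_1(X)\xi_2 = -f\,(u\t v)\t(u\t X) + g(u,X)\,\xi_2$ once one checks that the leftover $-f\,v\t X - f\,g(u\t v,X)u$ on the left matches the corresponding terms produced symmetrically on the right. The main obstacle is precisely pinning down and correctly applying that second-order cross-product identity — it is the $G_2$-specific analogue of quaternionic associativity failure, valid because $\t$ is a vector cross product on $\R^7$, and all the bookkeeping of which $g(\cdot,\cdot)$ correction terms survive has to be done with care. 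Once that identity is in hand the rest is linear algebra, and Kuo's theorems finish the proof; the metric $g_\vp$ serves as the common compatible metric, giving the "compatible with the $G_2$-metric" clause for free.
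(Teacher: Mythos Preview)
Your approach is essentially identical to the paper's: Thomas's theorem for the two vector fields, orthogonalize via $\xi_2 = (u\t v)/\|u\t v\|$, build each $(\phi_i,\xi_i,\eta_i,g_\vp)$ exactly as in the Introduction, and then verify only the relation $\phi_1\phi_2-\eta_2\ot\xi_1=-\phi_2\phi_1+\eta_1\ot\xi_2$ using the second-order cross-product identity you quote, invoking Kuo to finish.

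One concrete slip in your step (iii) bookkeeping: after applying \eq{cp1} to $u\t(u\t(v\t X))$ and $(u\t(u\t X))\t v$ you should get, in addition to the four terms you list, the terms $-fX\t v$ and $+fg(u,v\t X)u$. These are not stragglers to be matched against anything ``produced symmetrically on the right''; the right-hand side $-\phi_2\phi_1(X)+\eta_1(X)\xi_2$ expands directly to $-f(u\t v)\t(u\t X)+fg(u,X)(u\t v)$ with no further identity needed. The cancellation happens entirely on the left: $-fv\t X - fX\t v = 0$ by antisymmetry of $\t$, and $fg(u,v\t X)u - fg(u\t v,X)u = 0$ since $g(u,v\t X)=\vp(v,X,u)=\vp(u,v,X)=g(u\t v,X)$. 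With those two cancellations the left side collapses exactly to the right side. Once you correct this, your proof is the paper's proof.
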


Assume now that the $G_2$-structure is integrable so that $(M,\vp)$ is a $G_2$-manifold. Recall that the third almost contact metric structure induced by the two almost contact metric structures constructed above is given by $\phi_3=\phi_1\phi_2-\eta_2\ot\xi_1$, $\xi_3=\phi_1\xi_2$ and $\eta_3=\eta_1\circ\phi_2$. From the above calculation, we know what $\phi_3$ looks like. It is straightforward to verify that $\xi_3$ is given by
\begin{equation}
 \xi_3=-v+\eta_1(v)u
\end{equation}
Of course, $\eta_3$ is the dual $1$-form to $\xi_3$ using the $G_2$-metric. In particular, we cannot use the results of the previous section on this almost contact metric structure, though they do apply to the almost contact metric structures $(\phi_i,\xi_i,\eta_i)$, $i=1,2$.

\begin{thm}
Let $(M,\vp)$ be a closed $G_2$-manifold. Then $\d\om_1=\d\om_2=0$. if and only if $\xi_1$ and $\xi_2$ are Killing. In this case, the almost contact metric $3$-structure is $3$-cosymplectic. 
\end{thm}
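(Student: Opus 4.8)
The plan is to derive everything from Theorem~\ref{cosymplectic}. The key observation is that each of the two almost contact metric structures $(\phi_i,\xi_i,\eta_i,g)$, $i=1,2$, is itself a standard $G_2$ almost contact metric structure in the sense of the Introduction: it is obtained from the unit vector field $\xi_i$ (namely $u$, respectively $(u\t v)/\|u\t v\|$) by $\phi_i(X)=\xi_i\t X$ and $\eta_i=g(\xi_i,\cdot)$. Hence Theorem~\ref{cosymplectic} applies verbatim to each of them and gives, for $i=1,2$, that $\d\om_i=0$ if and only if $\na\xi_i=0$, which on the Ricci-flat manifold $M$ is in turn equivalent to $\xi_i$ being Killing. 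Taking $i=1$ and $i=2$ together proves the stated equivalence ``$\d\om_1=\d\om_2=0$ if and only if $\xi_1$ and $\xi_2$ are Killing.'' Moreover, when this holds Theorem~\ref{cosymplectic} also tells us that $(\phi_i,\xi_i,\eta_i,g)$, $i=1,2$, are cosymplectic, so two of the three structures needed for the $3$-cosymplectic conclusion are already in hand.

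What remains is to prove that the induced third structure $(\phi_3,\xi_3,\eta_3,g)$, with $\phi_3=\phi_1\phi_2-\eta_2\ot\xi_1$, $\xi_3=\phi_1\xi_2$ and $\eta_3=g(\xi_3,\cdot)$, is also cosymplectic; the point is that Theorem~\ref{cosymplectic} cannot be quoted directly here, because $\phi_3$ is not of the form $\xi_3\t(\cdot)$. Instead I would show that the whole third structure is parallel whenever $\xi_1$ and $\xi_2$ are. Assuming $\xi_1,\xi_2$ Killing, by the previous paragraph $\na\xi_1=\na\xi_2=0$; then \eq{nablaphi} (whose only input is parallelism of the cross product) gives $\na\phi_1=\na\phi_2=0$, and since $\eta_1,\eta_2$ are the $g$-duals of $\xi_1,\xi_2$ we also get $\na\eta_1=\na\eta_2=0$. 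Differentiating the three defining formulas then yields $\na\xi_3=(\na\phi_1)\xi_2+\phi_1(\na\xi_2)=0$, $\na\phi_3=(\na\phi_1)\phi_2+\phi_1(\na\phi_2)-(\na\eta_2)\ot\xi_1-\eta_2\ot\na\xi_1=0$, and $\na\eta_3=0$.

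Once $\na\phi_3=\na\xi_3=\na\eta_3=0$, the argument in the proof of Theorem~\ref{cosymplectic} carries over to the third structure word for word: from $(\na_X\om_3)(Y,Z)=g\big(Y,(\na_X\phi_3)Z\big)$ we get $\na\om_3=0$, hence $\d\om_3=0$; likewise $\na\eta_3=0$ forces $\d\eta_3=0$; and the normality computation given there uses only $\phi_3^2=-I+\eta_3\ot\xi_3$ and $\na\phi_3=0$, so it shows $N^{(1)}(X,Y)=[\phi_3,\phi_3](X,Y)+2\,\d\eta_3(X,Y)\xi_3=0$. Thus $(\phi_3,\xi_3,\eta_3,g)$ is almost cosymplectic and normal, i.e. cosymplectic, and therefore the almost contact metric $3$-structure is $3$-cosymplectic. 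I expect the only genuine obstacle to be recognizing that Theorem~\ref{cosymplectic} does not apply on the nose to the third structure, and circumventing this by observing that parallelism of $\xi_1,\xi_2$ (through parallelism of $\phi_1,\phi_2,\eta_1,\eta_2$) propagates to $\phi_3,\xi_3,\eta_3$; everything else is a direct appeal to Theorem~\ref{cosymplectic} and a one-line differentiation of Kuo's formulas.
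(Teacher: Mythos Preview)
Your proposal is correct and follows essentially the same route as the paper: apply Theorem~\ref{cosymplectic} to the two standard structures, observe that $\na\xi_1=\na\xi_2=0$ forces $\na\phi_1=\na\phi_2=\na\eta_1=\na\eta_2=0$, and then differentiate Kuo's formulas to get $\na\phi_3=0$. The only cosmetic difference is in the final step: the paper concludes cosymplecticity of the third structure by invoking the classical equivalence ``cosymplectic $\Leftrightarrow$ $\na\phi=0$'' (Blair, Theorem~6.8), whereas you re-run the explicit $\d\om_3=0$, $\d\eta_3=0$, and $N^{(1)}=0$ computations from the proof of Theorem~\ref{cosymplectic}; both are equally valid and amount to the same content.
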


\begin{proof}
The first assertion follows directly from Theorem \ref{cosymplectic}. 

For the second assertion, Theorem \ref{cosymplectic} implies that if $\xi_1$ and $\xi_2$ are Killing, then $(\phi_i,\xi_i,\eta_i,g)$, $i=1,2$, are cosymplectic; thus, it suffices to show that $(\phi_3,\xi_3,\eta_3,g)$ is cosymplectic. Now, note that $\d\om_1=\d\om_2=0$ and $\d\eta_1=\d\eta_2=0$ by definition, that $\na\xi_1=\na\xi_2=0$ since $G_2$-manifolds are Ricci-flat and that $\na\phi_1=\na\phi_2=0$ by Equation \eq{nablaphi}. In fact, it is well-known that an almost contact metric structure is cosymplectic if and only if $\phi$ is parallel (see, e. g. \cite[Theorem $6.8$]{Blair2}). Let $X$, $Y$ be any vector fields on $M$.
\begin{equation}
 \begin{split}
  (\na_X\phi_3)(Y)=&\na_X\phi_3Y-\phi_3(\na_XY)\\
  =&\na_X(\phi_1(\phi_2Y))-\na_X(\eta_2(Y)\xi_1)-\phi_1(\phi_2(\na_XY))+\eta_2(\na_XY)\xi_1\\
  =&(\na_X\phi_1)(\phi_2Y)+\phi_1((\na_X\phi_2)Y)+\phi_1(\phi_2(\na_XY)-((\na_X\eta_2)Y)\xi_1-\eta_2(\na_XY)\xi_1-\eta_2(Y)(\na_X\xi_1)\\
  &-\phi_1(\phi_2(\na_XY))+\eta_2(\na_XY)\xi_1)\\
  =&-((\na_X\eta_2)Y)\xi_1\\
  =&-(\na_X\eta_2Y-\eta_2(\na_XY))\xi_1\\
  =&-(\na_Xg(\xi_2,Y)-g(\xi_2,\na_XY))\xi_1\\
  =&-(g(\na_X\xi_2,Y)+g(\xi_2,\na_XY)-g(\xi_2,\na_XY))\xi_1\\
  =&0\\
 \end{split}
\end{equation}
\end{proof}

\begin{cor}
 On a closed $G_2$-manifold $(M,\vp)$ with full $G_2$-holonomy, the almost contact metric $3$-structure constructed above cannot be an almost cosymplectic $3$-structure and hence cannot be a cosymplectic $3$-structure.
\end{cor}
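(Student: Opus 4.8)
The plan is to reduce the corollary to the previously established obstruction on individual standard $G_2$ almost contact metric structures. Recall that an almost cosymplectic $3$-structure requires each of the three almost contact metric structures $(\phi_i,\xi_i,\eta_i,g)$, $i=1,2,3$, to be almost cosymplectic, i.e. to satisfy $\d\om_i=0$ and $\d\eta_i=0$. The almost contact metric structures $(\phi_1,\xi_1,\eta_1,g)$ and $(\phi_2,\xi_2,\eta_2,g)$ are both of the standard $G_2$ type (built from a unit vector field via the cross product), so Corollary~\ref{G2manifold} applies directly to them: on a $G_2$-manifold with full $G_2$-holonomy, neither can be almost cosymplectic because $\d\om_i\neq 0$, equivalently $\na\xi_i\neq 0$, equivalently $\xi_i$ is not parallel. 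Since full $G_2$-holonomy forbids any nonzero parallel vector field, $\na\xi_1\neq 0$ already, so the first component alone fails to be almost cosymplectic, and hence the $3$-structure cannot be an almost cosymplectic $3$-structure.

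The remaining step is to deduce that it cannot be a cosymplectic $3$-structure either. This is immediate from definitions: a cosymplectic $3$-structure is in particular an almost cosymplectic $3$-structure (each $(\phi_i,\xi_i,\eta_i,g)$ is cosymplectic, i.e. almost cosymplectic and normal), so if the $3$-structure were cosymplectic it would a fortiori be almost cosymplectic, contradicting the first part.

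I do not anticipate a real obstacle here — the content is entirely in Corollary~\ref{G2manifold}, and the only thing to be careful about is that the two ambient almost contact metric structures used to generate the $3$-structure, namely $(\phi_1,\xi_1,\eta_1)$ with $\xi_1=u$ and $(\phi_2,\xi_2,\eta_2)$ with $\xi_2=(u\t v)/\|u\t v\|$, are genuinely of the standard $G_2$ form so that the earlier classification results are applicable; this is clear from their construction in Section~\ref{AC3S}. One could equally phrase the argument through the induced fundamental $2$-forms $\om_i$ and the identity $\om_i = -\xi_i\lrcorner\vp$ together with the computation in the proof of Theorem~\ref{cosymplectic} showing $\d\om_i = 0 \iff \na\xi_i = 0$, but invoking Corollary~\ref{G2manifold} is cleaner. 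The proof will therefore be two short sentences rather than a computation.
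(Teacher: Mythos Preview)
Your proposal is correct and follows essentially the same approach as the paper, which simply states that the proof is similar to that of Corollary~\ref{G2manifold} and omits it. One small slip: the construction of $(\phi_i,\xi_i,\eta_i)$ for $i=1,2$ is in Section~5, not Section~\ref{AC3S}, but this does not affect the argument.
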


The proof is similar to the proof of Corollary \ref{G2manifold} and will be omitted. The following result follows directly from Corollary \ref{sasakian}.

\begin{cor}
 On a closed $G_2$-manifold $(M,\vp)$ the almost contact metric $3$-structure constructed above cannot be a contact metric $3$-structure (equivalently, it cannot be $3$-Sasakian).
\end{cor}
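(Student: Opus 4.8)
The plan is to deduce the statement immediately from Corollary~\ref{sasakian}. Recall that a \emph{contact metric $3$-structure} is, by definition, an almost contact metric $3$-structure $(\phi_i,\xi_i,\eta_i,g)$, $i=1,2,3$, each of whose three constituent almost contact metric structures $(\phi_i,\xi_i,\eta_i,g)$ is itself a contact metric structure. So it is enough to exhibit a single index $i$ for which $(\phi_i,\xi_i,\eta_i,g)$ fails to be a contact metric structure, and then no structure of the form built in this section can be a contact metric $3$-structure.

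First I would observe that $(\phi_1,\xi_1,\eta_1,g)$ is precisely the standard $G_2$ almost contact metric structure attached to the unit vector field $\xi_1=u$: indeed, the defining formulas $\phi_1(X)=\xi_1\t X$ and $\eta_1(X)=g(\xi_1,X)$ are exactly the recipe \eq{g2acms1} for the standard $G_2$ almost contact metric structure with $\xi$ taken to be $\xi_1$ (and the same holds for $(\phi_2,\xi_2,\eta_2,g)$ with $\xi_2=(u\t v)/||u\t v||$). Since $(M,\vp)$ is a (closed) $G_2$-manifold, Corollary~\ref{sasakian} then says that $(\phi_1,\xi_1,\eta_1,g)$ cannot be a contact metric structure; hence the triple $\{(\phi_i,\xi_i,\eta_i,g)\}_{i=1}^{3}$ fails the defining requirement above, and the almost contact metric $3$-structure constructed in this section cannot be a contact metric $3$-structure.

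For the parenthetical equivalence I would simply note that a $3$-Sasakian structure is in particular a contact metric $3$-structure (each Sasakian structure being a contact metric structure), together with the standard converse that a contact metric $3$-structure is automatically $3$-Sasakian (see the discussion of Kuo's work in Section~\ref{AC3S}); the two non-existence statements therefore coincide. I do not expect any genuine obstacle here: the argument is a one-line consequence of Corollary~\ref{sasakian}, the only point requiring a moment's care being the bookkeeping identification of $(\phi_1,\xi_1,\eta_1,g)$ as an instance of the standard $G_2$ construction, after which Corollary~\ref{sasakian} applies verbatim.
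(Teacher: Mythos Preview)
Your proposal is correct and follows essentially the same approach as the paper, which simply states that the result ``follows directly from Corollary~\ref{sasakian}''; you have merely made explicit the identification of $(\phi_1,\xi_1,\eta_1,g)$ with the standard $G_2$ almost contact metric structure so that Corollary~\ref{sasakian} applies. One minor quibble: Section~\ref{AC3S} does not actually contain the converse (that a contact metric $3$-structure is automatically $3$-Sasakian); the paper, like you, simply asserts the parenthetical equivalence without further justification.
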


\bibliographystyle{amsplain}
\bibliography{ajbibliography}
\end{document}